\newcommand{\R}{{\mathbb R}}
\newcommand{\beq}{\begin{equation}}
\newcommand{\eeq}{\end{equation}}
\newcommand{\ben}{\begin{eqnarray}}
\newcommand{\een}{\end{eqnarray}}
\newcommand{\beno}{\begin{eqnarray*}}
\newcommand{\eeno}{\end{eqnarray*}}
\newtheorem{thm}{Theorem}[section]
\newtheorem{lem}[thm]{Lemma}
\newtheorem{prop}[thm]{Proposition}
\newtheorem{coro}[thm]{Corollary}
\begin{document}

\title[Finite Morse index solutions]{Some remarks on the structure of finite Morse index solutions to the Allen-Cahn equation in $\R^2$}
\author[K. Wang]{ Kelei Wang}
 \address{\noindent K. Wang-
 School of Mathematics and Statistics
\& Computational Science Hubei Key Laboratory, Wuhan University,
Wuhan, 430072, China.}
\email{wangkelei@whu.edu.cn}

\begin{abstract}
For a solution of the Allen-Cahn equation in $\R^2$, under the
natural linear growth energy bound, we show that the blowing down
limit is unique. Furthermore, if the solution has finite Morse
index, the blowing down limit satisfies the multiplicity one
property.
\end{abstract}

\keywords{Finite Morse index solution; phase transition; Allen-Cahn;
minimal surface.}

\subjclass{35B08, 35B35, 35J61, 35R35}

\maketitle

\date{}

\section{Introduction}
\setcounter{equation}{0}

Let $u\in C^2(\R^2)$ be a solution to the problem
\begin{equation}\label{Allen-Cahn}
\Delta u=W^\prime(u)
\end{equation}
where $W$ is a standard double-well potential.

Assume the energy grows linearly, i.e. there exists a constant $C>0$
such that
\begin{equation}\label{linear energy growth}
\int_{B_R(0)}\frac{1}{2}|\nabla u|^2+W(u)\leq CR,\quad \forall R>0.
\end{equation}

For $\varepsilon\to0$, let
\[u_\varepsilon(x,y):=u(\varepsilon^{-1}x,\varepsilon^{-1}y).\]
By \eqref{linear energy growth}, we can assume that, up to a
subsequence of $\varepsilon\to0$,
\[\varepsilon|\nabla
u_\varepsilon|^2dxdy\rightharpoonup\mu_1,\]
\[\frac{1}{\varepsilon}W(u_\varepsilon)dxdy\rightharpoonup\mu_2,\]
weakly as Radon measures on any compact set of $\R^2$. Denote
$\mu=\mu_1/2+\mu_2$ and $\Sigma=\mbox{spt}\mu$.

We can also assume the matrix valued measures
\[\varepsilon\nabla u_\varepsilon\otimes \nabla u_\varepsilon dx\rightharpoonup[\tau_{\alpha\beta}]\mu_1,\]
where $[\tau_{\alpha\beta}]$, $1\leq\alpha,\beta\leq 2$, is
measurable with respect to $\mu_1$. Moreover, $\tau$ is nonnegative
definite $\mu_1$-almost everywhere and it satisfies
\[\sum_{\alpha=1}^2\tau_{\alpha\alpha}=1,\quad \mu_1-a.e.\]

By \cite{H-T}, we have the following characterization about the
convergence of $u_\varepsilon$:
\begin{thm}\label{thm blowing down}
\begin{itemize}
\item[(i)] $u_\varepsilon\to\pm 1$ uniformly on any compact set of
$\R^2\setminus\Sigma$;
\item[(ii)] there exists $N\in\mathbb{N}$ and $N$ unit vectors $e_i, 1\leq i\leq
N$, such that $\Sigma=\cup_{i=1}^NL_i$, where
\[L_i:=\{te_i: t\geq0\};\]
\item[(iii)] $\mu_1=2\mu_2=\sigma_0\sum_{i=1}^Nn_i\mathcal{H}^1\lfloor_{L_i}$, where $\sigma_0$ is a constant and $n_i\in\mathbb{N}$;
\item[(iv)] $I-\tau=e_i\otimes  e_i$ on $L_i\setminus\{0\}$;
\item[(v)] $\sum_{i=1}^Nn_ie_i=0$.
\end{itemize}
\end{thm}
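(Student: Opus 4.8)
The plan is to realize the limit measure $\mu$ as the weight of a stationary, integral $1$-varifold in $\R^2$ that happens to be a cone, and then to classify such cones. I would organize the argument in three stages: equipartition together with the varifold structure, the conical reduction forced by the linear energy growth, and the final balancing identity.

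First I would establish equipartition of energy, i.e.\ the equality $\mu_1=2\mu_2$ in (iii). Since $u$ is an entire solution, Modica's gradient bound gives $\frac{\varepsilon}{2}|\nabla u_\varepsilon|^2\le\frac{1}{\varepsilon}W(u_\varepsilon)$, so the discrepancy $\xi_\varepsilon:=\frac{\varepsilon}{2}|\nabla u_\varepsilon|^2-\frac{1}{\varepsilon}W(u_\varepsilon)\le 0$; a monotonicity-formula argument then forces $\xi_\varepsilon\rightharpoonup 0$, whence $\tfrac12\mu_1=\mu_2$. The same vanishing of the limiting energy density off $\Sigma$ shows $W(u_\varepsilon)\to 0$ locally uniformly away from $\Sigma$, and since $W$ vanishes only at $\pm1$ a continuity/connectedness argument pins down the sign on each connected component of $\R^2\setminus\Sigma$, giving (i).

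Second I would pass to the limit in the divergence-free stress--energy tensor
\[
T^\varepsilon_{\alpha\beta}=\Big(\tfrac{\varepsilon}{2}|\nabla u_\varepsilon|^2+\tfrac{1}{\varepsilon}W(u_\varepsilon)\Big)\delta_{\alpha\beta}-\varepsilon\,\partial_\alpha u_\varepsilon\,\partial_\beta u_\varepsilon,\qquad \partial_\alpha T^\varepsilon_{\alpha\beta}=0,
\]
which, using $\mu_1=2\mu_2$ and the definition of $\tau$, shows that the varifold $V$ with weight $\mu$ and tangential projection $I-\tau$ is stationary. The heart of the matter, and the step I expect to be the main obstacle, is the Hutchinson--Tonegawa integrality theorem: blowing up at $\mathcal{H}^1$-a.e.\ point of $\Sigma$ and comparing $u_\varepsilon$ with the one-dimensional heteroclinic profile shows that the density of $\mu$ equals $\sigma_0$ times a positive integer, where $\sigma_0=\int_{-1}^{1}\sqrt{2W(s)}\,ds$. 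This produces the integer multiplicities $n_i$ in (iii) and, since the gradient concentrates in the normal direction $\nu$ so that $\tau=\nu\otimes\nu$, identifies $I-\tau$ with the tangential projection $e_i\otimes e_i$ in (iv). Establishing the clearing-out and density lower bounds, and ruling out mass loss in this blow-up, is the technically delicate part.

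Third I would exploit the linear growth \eqref{linear energy growth} together with the Allen--Cahn monotonicity formula: the density ratio $r\mapsto r^{-1}\int_{B_r}\big(\tfrac12|\nabla u|^2+W(u)\big)$ is monotone and, by \eqref{linear energy growth}, bounded, so it admits a finite limit at infinity. In the blow-down the monotonicity ``gap'' integrates to a finite quantity and must therefore vanish, forcing the limit varifold to be dilation invariant, i.e.\ a cone; this gives (ii), since a stationary integral $1$-cone in $\R^2$ is supported on finitely many rays $L_i=\{te_i:t\ge0\}$ from the origin (stationarity away from $0$ makes each component a straight segment, and invariance under dilation makes it a ray). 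Finally I would compute the first variation of $V=\sigma_0\sum_i n_i\mathcal{H}^1\lfloor L_i$ against a vector field $X$ that is constant near the origin: the tangential divergence integrates to the boundary contribution at the vertex, giving $\delta V(X)=-\sigma_0\,X(0)\cdot\sum_i n_i e_i$. Stationarity then forces $\sum_i n_i e_i=0$, which is exactly (v).
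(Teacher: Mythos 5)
The paper does not actually prove this theorem: it is quoted directly from Hutchinson--Tonegawa \cite{H-T} (combined with the standard blow-down reduction), so there is no in-paper argument to compare yours against. Your outline is a correct reconstruction of how the result is established in the literature, and it correctly separates the two distinct ingredients: the local Hutchinson--Tonegawa machinery (vanishing of the discrepancy, hence equipartition $\mu_1=2\mu_2$; the stationary integral varifold structure obtained by passing to the limit in the stress--energy tensor; the integrality of the density), which gives (i), (iii) and (iv); and the global blow-down reduction (the Allen--Cahn monotonicity formula together with the linear bound \eqref{linear energy growth} forces the density ratio of the limit to be constant, hence the limit varifold is a cone; a stationary integral $1$-cone in $\R^2$ is a finite union of rays with integer multiplicities), which gives (ii). Your first-variation computation for (v) --- testing $V=\sigma_0\sum_i n_i\mathcal{H}^1\lfloor L_i$ with a vector field $X$ constant near the vertex to get $\delta V(X)=-\sigma_0\,X(0)\cdot\sum_i n_ie_i$, so stationarity forces $\sum_i n_ie_i=0$ --- is correct and is exactly the standard argument. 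Two remarks. First, your use of Modica's inequality to fix the sign of the discrepancy and of the monotonicity formula to kill it in the limit is a genuine simplification available because $u$ is an entire solution; \cite{H-T} works locally and must prove discrepancy vanishing without this crutch. Second, the step you flag as the main obstacle --- integrality of the density and absence of mass loss --- is precisely the content of the cited theorem of \cite{H-T}; your sketch defers it rather than proves it, which is acceptable here exactly because the paper itself treats it as a black box, but it should be stated as a citation rather than as a step you intend to carry out.
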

In the above, the constant $\sigma_0$ is determined as follows.
There exists a function $g\in C^2(\R)$ satisfying
\begin{equation}\label{1d profile}
 \left\{\begin{aligned}
&g^{\prime\prime}=W^\prime(g),\ \ \ \mbox{on }\ \R,\\
&g(0)=0,\\
&\lim_{t\to\pm\infty}g(t)=\pm1.
                          \end{aligned} \right .
\end{equation}
Moreover, the following identity holds for $g$:
\begin{equation}\label{first integral}
g^\prime(t)=\sqrt{2W(g(t))}>0,\quad \mbox{on }\ \R.
\end{equation}
As $t\to\pm\infty$, $g(t)$ converges to $\pm 1$ exponentially. Hence
the following quantity is finite:
\[\sigma_0:=\int_{-\infty}^{+\infty}\frac{1}{2}\big|g^\prime(t)\big|^2+W(g(t))dt
=\int_{-\infty}^{+\infty}\big|g^\prime(t)\big|^2dt.\]

In this theorem, we do not claim the uniqueness of $\Sigma$ and
$(n_i)$, because it is obtained by a compactness argument. It may
depend on the subsequence of $\varepsilon\to0$. Our first main
result is
\begin{thm}\label{main result 1}
$\Sigma$ and $(n_1,\cdots, n_N)$ is uniquely determined by $u$.
\end{thm}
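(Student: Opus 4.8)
The plan is to extract from $u$ a single scale-invariant quantity that already fixes the total multiplicity, and then to localize this information on the circle $S^1$ in order to pin down the individual rays and their weights; the genuinely hard part is showing that the layers do not wander as one lets $r\to\infty$.

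The starting point is the Modica monotonicity formula. Since $|u|\le 1$ by the maximum principle, Modica's inequality $\tfrac12|\nabla u|^2\le W(u)$ holds, so the rescaled energy
\[
E(r):=\frac{1}{r}\int_{B_r(0)}\Big(\tfrac12|\nabla u|^2+W(u)\Big)
\]
is nondecreasing in $r$; by \eqref{linear energy growth} it is bounded, so $\Theta_\infty:=\lim_{r\to\infty}E(r)$ exists and is an intrinsic quantity of $u$, independent of any subsequence. On the other hand, the change of variables $y=\varepsilon^{-1}x$ gives $E(\varepsilon^{-1})=\int_{B_1}\big(\tfrac12\varepsilon|\nabla u_\varepsilon|^2+\tfrac1\varepsilon W(u_\varepsilon)\big)$, which along the chosen subsequence converges to $\tfrac12\mu_1(\overline{B_1})+\mu_2(\overline{B_1})=\mu(\overline{B_1})$ (the boundary circle carries no $\mu$-mass, as it meets each $L_i$ in one point). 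By Theorem~\ref{thm blowing down}(iii) this equals $\sigma_0\sum_{i=1}^N n_i$. Hence $\sigma_0\sum_i n_i=\Theta_\infty$ for every subsequential limit, so the total multiplicity $\sum_i n_i$ is already determined by $u$; the same computation on dyadic annuli re-confirms that every blow-down limit is a cone.

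It remains to fix the directions $e_i$ and the individual weights. The idea is to read them off from the far field of $u$. For a direction $\nu\in S^1$ lying strictly inside a sector missing $\Sigma$, Theorem~\ref{thm blowing down}(i) gives $u_\varepsilon\to\pm1$ uniformly there, so the energy of $u$ decays along the ray $R_\nu:=\{t\nu:t>0\}$. Thus each $e_i$ is, intrinsically, a direction at which the far-field energy of $u$ on $S^1$ fails to vanish, and the weight $n_i$ is recovered from the localized density $\sigma_0^{-1}\lim_{r\to\infty} r^{-1}\int_{B_r\cap C_i}(\tfrac12|\nabla u|^2+W(u))=n_i$, where $C_i$ is a fixed thin cone about $e_i$. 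The task is therefore to show that the location of each concentration direction and the value of this localized density do not depend on the subsequence.

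The main obstacle is exactly this last point: ruling out that the transition layers rotate or oscillate as $r\to\infty$, so that the concentration directions are genuine limits rather than mere accumulation points. I would handle it by an improvement-of-flatness/decay argument. Once at some large scale the configuration is $\delta$-close (in the varifold sense) to a fixed union of rays, the stationarity encoded in Theorem~\ref{thm blowing down}(iv)--(v) together with the localized monotonicity above should force the tilt of each layer to decay at a definite rate; summing this decay over dyadic scales makes the asymptotic direction of each layer Cauchy in $r$, hence convergent, and likewise makes each localized density converge. Since every subsequential limit then yields the same directions and the same localized weights, and these sum to the fixed $\Theta_\infty$ of the first step, $\Sigma$ and $(n_1,\dots,n_N)$ are uniquely determined by $u$. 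In the planar setting one may argue this decay geometrically as well: each unbounded component of a level set is an embedded curve whose curvature decays fast enough (again from the localized energy decay) that $\int|\kappa|<\infty$ along it, giving a unique asymptotic line, while the disjointness of these curves in $\R^2$ prevents any reshuffling of directions between subsequences.
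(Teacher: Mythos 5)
Your first step is sound and is a nice observation: Modica's inequality makes $E(r)$ monotone, so $\Theta_\infty$ exists and every subsequential blow-down must satisfy $\sigma_0\sum_i n_i=\Theta_\infty$; the total weight is therefore intrinsic to $u$. The gap is in the second step, which is where the entire content of Theorem \ref{main result 1} lies. What you must show is that the concentration directions and the localized cone densities have genuine limits as $r\to\infty$, i.e.\ uniqueness of the tangent cone at infinity, and your mechanism for this --- ``stationarity together with the localized monotonicity should force the tilt of each layer to decay at a definite rate'' --- is asserted, not proved, and cannot be established along the lines you indicate. First, Theorem \ref{main result 1} assumes only \eqref{linear energy growth}, with no stability or finite Morse index hypothesis, so no De Giorgi-type theorem or curvature estimate is available; your fallback geometric argument (integrable curvature along level curves) fails for the same reason, since the curvature of level sets is a second-order quantity that the energy bound does not control. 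Second, the blow-down may carry multiplicities $n_i\ge 2$, and for stationary configurations of multiplicity $\ge 2$ there is no Allard-type $\varepsilon$-regularity or improvement-of-flatness theorem; this is precisely the situation your decay claim would have to handle. Third, monotonicity alone yields $E(2r)-E(r)\to 0$, hence that all subsequential blow-downs are cones, but it gives no rate; without a rate the layers may a priori rotate arbitrarily slowly, consistently with $E(r)\uparrow\Theta_\infty$, and different subsequences could then produce different rays. Upgrading ``deviation tends to zero'' to ``deviation summable over dyadic scales'' is the classical uniqueness-of-tangent-cone problem (Allard--Almgren, Simon), which is deep even for smooth multiplicity-one minimal surfaces; your sketch does not engage with it.

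The paper bypasses this difficulty by a two-dimensional trick of a completely different nature. Stationarity implies (following Smyrnelis) that the stress-energy tensor of $u$ is the Hessian of a function $U\in C^3(\R^2)$, and Modica's inequality says exactly that $U$ is convex; the linear energy growth gives $U(X)\le C|X|$. Blow-downs $\varepsilon U(\varepsilon^{-1}X)$ of a convex function of linear growth converge \emph{without} passing to subsequences (the limit is the recession function), so uniqueness comes for free from convexity. One then reads off $\Sigma$ as the set where the $1$-homogeneous convex function $U_\infty$ fails to be linear, and each $n_\alpha$ from the jump $2n_\alpha\sigma_0 e_\alpha^{\bot}$ of $\nabla U_\infty$ across the ray $L_\alpha$. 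If you wanted to salvage your route, you would need a Lojasiewicz--Simon-type argument adapted to higher-multiplicity diffuse interfaces; the convexity device is what makes the paper's proof short and subsequence-free.
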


Next we further assume that $u$ has finite Morse index, i.e. the
maximal dimension of linear subspaces of
\[\{\varphi\in C_0^\infty(\R^2): \int_{\R^2}|\nabla\varphi|^2+W^{\prime\prime}(u)\varphi^2\leq 0\}\]
is finite. This is equivalent to the fact that $u$ is stable outside
a compact set (see \cite{dev}), i.e. there exists a compact set $K$
such that for any $\varphi\in C_0^\infty(\R^2\setminus K)$,
\[\int_{\R^2}|\nabla\varphi|^2+W^{\prime\prime}(u)\varphi^2\geq 0.\]

Our second result is
\begin{thm}\label{main result 2}
Let $u$ be a solution of \eqref{Allen-Cahn} with finite Morse index.
Then in the blowing down limit, $n_i=1$ for every $i=1,\cdots, N$.
\end{thm}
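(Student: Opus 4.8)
The plan is to argue by contradiction: suppose $n_{i_0}\ge 2$ for some $i_0$, and unwind what this says about the geometry of $u$ at infinity. By parts (i) and (iii) of Theorem~\ref{thm blowing down}, the energy of $u$ carried in a thin cone around $L_{i_0}$ out to radius $R$ is asymptotic to $n_{i_0}\sigma_0 R$. Since a single transition contributes $\sigma_0$ per unit length, $n_{i_0}\ge 2$ forces at least two components of the transition set $\{|u|\le 1/2\}$ to run to infinity inside this cone. Combined with the uniqueness in Theorem~\ref{main result 1}, these layers share the asymptotic direction $e_{i_0}$; writing their positions as graphs $h_1(s)<h_2(s)$ over the arc-length parameter $s$ along $e_{i_0}$, the statement $n_{i_0}\ge 2$ is precisely that the separation $d(s):=h_2(s)-h_1(s)$ is \emph{sublinear}, $d(s)=o(s)$ (otherwise the two layers would fan out into two distinct rays, each of multiplicity one).

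The next step is to turn the finite Morse index hypothesis into a usable description of these far-out layers. Since $u$ is stable outside a compact set $K$, the portions of the two layers lying in $\R^2\setminus K$ are stable diffuse interfaces, and I would invoke the interior curvature and separation estimates for stable solutions of \eqref{Allen-Cahn} to guarantee that, far from the origin, each layer is a nearly flat graph of small curvature, that $d(s)\to\infty$, and that the $C^2$ profile of $u$ transverse to the layers is asymptotically a superposition of two translates of the one-dimensional profile $g$ of \eqref{1d profile}. Projecting \eqref{Allen-Cahn} onto the two approximate kernel directions $g'(\,\cdot-h_1)$ and $g'(\,\cdot-h_2)$ then yields, to leading order, the Jacobi--Toda system for $(h_1,h_2)$, and hence a closed second-order equation for the separation of the form
\[
d''(s)=\kappa\,e^{-d(s)}\bigl(1+o(1)\bigr),\qquad \kappa>0 .
\]
The strict positivity of $\kappa$ reflects the (destabilizing) attractive nature of the adjacent kink--antikink interaction; it is the same mechanism that makes the known multiple-end solutions unstable through their distance mode.

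With the sign $\kappa>0$ in hand the contradiction is immediate. The separation $d$ is asymptotically convex and bounded below by a positive constant, so it cannot stay bounded and must tend to $+\infty$; integrating once gives the first integral $d'(s)^2=2E-2\kappa e^{-d(s)}+o(1)$ with $E>0$, whence $d'(s)\to\sqrt{2E}>0$ and $d(s)\sim\sqrt{2E}\,s$ grows \emph{linearly}. This contradicts $d(s)=o(s)$: the two layers in fact fan out with a fixed positive opening angle and converge to two distinct rays in the blowing-down limit, so they cannot collapse onto the single half-line $L_{i_0}$. Hence $n_{i_0}\ge 2$ is impossible, and as $i_0$ was arbitrary we conclude $n_i=1$ for every $i$.

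The main obstacle is the second step: making the reduction to the Jacobi--Toda system rigorous with the exponentially small coupling controlled. Because the interaction between the diffuse layers is of order $e^{-d(s)}$, the a priori estimates must be sharp enough for this term to survive the projection and to pin down the sign $\kappa>0$; this is exactly where stability outside $K$ is used, both to secure the uniform flatness and curvature bounds that legitimize treating the level sets as graphs and to exclude any oscillatory behaviour of $d(s)$ that would otherwise spoil the convexity argument. Once the graph description and the sign of $\kappa$ are established, the concluding ODE analysis is elementary.
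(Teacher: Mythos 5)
Your Step 2 is not a proof but a precise statement of what is hard, and nothing you invoke actually delivers it. Stability outside a compact set, fed through the stable De Giorgi theorem, yields only \emph{qualitative} information: blow-ups along the nodal set converge in $C^2_{loc}$ to one-dimensional profiles (this is the paper's Lemma \ref{translation at infinity 0}), hence the nodal curves are eventually flat graphs and the separations diverge (Lemmas \ref{nodal sets} and \ref{lem 3.3}). It comes with no rate whatsoever. To project \eqref{Allen-Cahn} onto the approximate kernel directions and keep the interaction term, you need the error of the two-layer superposition ansatz to be $o(e^{-d})$ (or at worst $O(e^{-d})$ with explicit constants) in the norms entering the projection; otherwise the exponentially small coupling is swamped. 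The moduli space theory of \cite{DKP} cannot be cited for this: it applies to solutions already known to have multiplicity-one, graphical, exponentially convergent ends, i.e.\ it \emph{presupposes} Theorems \ref{main result 2} and \ref{main result 3}, which is exactly what is being proved, so invoking it here is circular. Moreover, the ODE endgame genuinely requires the error to be small \emph{relative} to $e^{-d}$: with merely absolute errors, say $d''=\kappa e^{-d}+O(s^{-2})$, the sublinear function $d(s)=2\log s$ is perfectly consistent (both sides are $O(s^{-2})$), and no contradiction with $d(s)=o(s)$ can be reached. So the heart of the argument is missing, and filling it in is not a matter of quoting known interior estimates; it is a substantial program in its own right (it is essentially the later ``finite Morse index implies finite ends'' line of work).

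The paper closes the argument by a route that avoids the Toda reduction entirely, which is worth internalizing. After the same soft preliminaries, it adapts Gui's Hamiltonian identity method \cite{Gui}: for each $x$ it minimizes over translations $(t_1,\dots,t_N)$ the $L^2$ distance $\|v\|^2=\int|u(x,y)-g(y;t_1,\dots,t_N)|^2dy$ from a glued profile, uses the resulting orthogonality conditions, the Hamiltonian identity (Proposition \ref{Hamiltonian identity 2}), and a spectral gap for the one-dimensional linearized operator (Proposition \ref{second eigenvalue}) to obtain a differential inequality of the form $\frac{d^2}{dx^2}\|v\|^2\geq(\mu+o(1))\|v\|^2$ up to exponentially small errors. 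This forces $\|v\|^2\leq Ce^{-cx}$, then $\int u_x^2\leq Ce^{-cx}$, then $|t_i'(x)|\leq Ce^{-cx}$, so every layer position converges to a \emph{finite} limit. That contradicts the divergence of separations in Lemma \ref{lem 3.3} as soon as $N\geq2$; note that no information on the sign or precise size of the layer interaction is ever needed. Your approach, if completed, would yield finer information (the actual interaction law $d''=\kappa e^{-d}$), but at the cost of exactly the estimates whose absence is the gap; the paper's argument trades that precision for a closed, self-contained proof.
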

As in \cite{DKP}, we introduce the following notations. Assume $e_i$
are in clockwise order. For each $i=1,\cdots, N$, let $L_i^\pm$ be
the rays generated by the vector $(e_i+e_{i+1})/2$ and
$(e_i+e_{i-1})/2$ respectively (with obvious modification at the end
points $i=1,N$). Denote $\Omega_i$ to be the cone bounded by
$L_i^\pm$. Our final result says
\begin{thm}\label{main result 3}
Let $u$ be a solution of \eqref{Allen-Cahn} in $\R^2$ with finite
Morse index, and $\Omega_i$ be defined as above. In each $\Omega_i$,
which we assume to be the cone $\{-\lambda_-x<y<\lambda_+ x\}$ for
two positive constants $\lambda_\pm$, there exists three constants
$C$, $R_0$ and $t_i$ such that
\[\sup_{-\lambda_-x<y<\lambda_+x}\big|u(x,y)-g(y-t_i)\big|\leq Ce^{-\frac{x}{C}}, \quad\forall x>R_0.\]
\end{thm}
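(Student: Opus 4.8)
The plan is to fix one cone $\Omega_i$, rotate so that $e_i=(1,0)$ and $L_i$ is the positive $x$-axis, and reduce the statement to a one-dimensional transition-layer analysis in the transverse variable $y$, with the radial variable $x$ playing the role of a ``time'' along which everything converges exponentially. The whole argument rests on the conclusion of Theorem~\ref{main result 2}, that $n_i=1$: multiplicity one is what forces the picture inside $\Omega_i$ to be that of a single, nondegenerate layer rather than several coalescing ones.

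First I would record the consequences of multiplicity one. Since the blow-down measure carries weight one along $L_i$, the convergence $u_\varepsilon\to\pm1$ of Theorem~\ref{thm blowing down}(i) is genuinely that of one layer across $L_i$, and on dyadic annuli $\{x\sim 2^k\}$ the rescaled solution is $C^2$-close to the profile $g$ oriented in the $y$-direction. From this I would extract that for $x$ large the zero set $\{u=0\}\cap\Omega_i$ is a single smooth curve, a graph $y=h(x)$ with $h(x)/x\to0$, that $u_y>0$ in a fixed tubular neighbourhood of it (nondegeneracy coming from $g'>0$ via \eqref{first integral}), and that in the two complementary regions $u$ is exponentially close to $\pm1$ in the distance to the interface, by a standard comparison argument using $W''(\pm1)>0$.

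Next I would write, in Fermi-type coordinates adapted to $\Gamma_i=\{y=h(x)\}$,
\[
u(x,y)=g\big(y-h(x)\big)+\phi(x,y),
\]
and linearise \eqref{Allen-Cahn} about $g$. The transverse operator $-\partial_{tt}+W''(g(t))$ is nonnegative with kernel spanned by $g'$ and a spectral gap above $0$, since $g'>0$ is its ground state; this gap is the engine of the argument. Projecting onto $g'$ yields a scalar equation for the interface of the form $h''=F$, while the orthogonal projection controls $\phi$ by the part of the source lying in the positive spectrum. The decisive point is to estimate $F$: its size is governed by the interaction between the layer in $\Omega_i$ and the layers in the adjacent cones. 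Because these neighbouring interfaces are separated from $\Gamma_i$ by the bisectors $L_i^\pm$, at radius $x$ they sit at mutual distance $\gtrsim x$, so their exponentially decaying tails contribute a forcing of size $O\big(e^{-x/C}\big)$. This is precisely where the exponential rate originates, and the reason the cone is taken strictly inside the bisectors.

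Finally, feeding $|F|\lesssim e^{-x/C}$ into the reduced equations, I would integrate $h''=O\big(e^{-x/C}\big)$ to get $h'(x)=a+O\big(e^{-x/C}\big)$ for some constant $a$; the constraint $h(x)/x\to0$ forces $a=0$, hence $h(x)\to t_i$ with $|h(x)-t_i|\le Ce^{-x/C}$, while the spectral gap simultaneously gives $|\phi|\le Ce^{-x/C}$. Combining through $|u-g(y-t_i)|\le|u-g(y-h(x))|+\|g'\|_\infty|h(x)-t_i|$ yields the claim uniformly on $\Omega_i$. I expect the main obstacle to be the rigorous derivation and closure of the interface equation: turning the geometric ``single layer along a straight ray'' picture into a genuine ODE for $h$ with an exponentially small, controlled right-hand side, while absorbing $\phi$ through the spectral gap, uniformly up to (but excluding) the bisectors where the neighbouring layers approach. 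Establishing $a=0$, namely that the interface does not drift off along a bisector, together with the uniform nondegeneracy of $u_y$ across the cone, are the points requiring the most care, and it is there that the finite Morse index hypothesis, through the multiplicity-one conclusion of Theorem~\ref{main result 2}, is indispensable.
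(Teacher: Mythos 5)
Your route is genuinely different from the paper's. You take Theorem \ref{main result 2} as an input and then run a single-layer Fermi-coordinate/Lyapunov--Schmidt reduction in the spirit of \cite{DKP}; this is legitimate (Theorem \ref{main result 2} is proved in the paper without using Theorem \ref{main result 3}, so there is no circularity), and the introduction explicitly concedes that, granted Theorem \ref{main result 2}, Theorem \ref{main result 3} follows from the refined asymptotics of \cite{DKP}. The paper's own proof goes the other way around: it never assumes multiplicity one. Instead of Fermi coordinates it projects $u(x,\cdot)$ in $L^2$ onto the family of $N$-layer profiles $g(y;t_1,\dots,t_N)$, derives --- using Gui's Hamiltonian identity \cite{Gui} and the finite-interval spectral gap of Proposition \ref{second eigenvalue} --- a differential inequality $\frac{d^2}{dx^2}\|v\|^2\geq(\mu+o(1))\|v\|^2+O(e^{-cx})$ for the orthogonal part $v$, concludes exponential decay of $\|v\|$, of $\int u_x^2$ and hence of the layer velocities $t_i^\prime(x)$, and then obtains $N=1$ by contradiction: the layer positions would all have finite limits, contradicting Lemma \ref{lem 3.3}. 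So the paper's argument delivers Theorems \ref{main result 2} and \ref{main result 3} simultaneously, which is exactly the novelty it claims; yours buys a simpler single-layer reduction at the price of invoking Theorem \ref{main result 2}.

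The soft spot in your plan is the ``decisive point'' you yourself flag: the claim that the projected forcing $F$ is $O(e^{-x/C})$ because the neighbouring layers sit at distance $\gtrsim x$. Those neighbour tails are indeed exponentially small, but they are not the dominant part of $F$: the projected equation also contains terms quadratic in $\phi$, couplings with $h^\prime$, and boundary terms on the growing cross-sections $(-\lambda_- x,\lambda_+ x)$, and a priori these are only $o(1)$, not exponentially small. You therefore cannot integrate $h^{\prime\prime}=O(e^{-x/C})$ before knowing that $\|\phi(x,\cdot)\|_{L^2}$ decays exponentially; and that decay is not a direct consequence of the spectral gap in the $y$-variable, but of a second-order differential inequality in $x$ of the form $\frac{d^2}{dx^2}\|\phi\|^2\geq\mu\|\phi\|^2-Ce^{-cx}$ combined with $\|\phi\|\to0$. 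Establishing this inequality --- with boundary terms controlled via the exponential decay of Lemma \ref{exponential decay}, with $\int u_x^2$ controlled via the Hamiltonian identity of Proposition \ref{Hamiltonian identity 2}, and with the spectral gap valid on finite growing intervals, which is exactly Proposition \ref{second eigenvalue} --- is the real core of the proof, both in \cite{DKP} and in the paper's proof of Theorem \ref{thm 3.1}. In your write-up it is precisely the step left as an ``expected obstacle'' rather than carried out, and the heuristic you offer for it (separation of the neighbouring interfaces) would not by itself close the argument.
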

If we have known Theorem \ref{main result 2}, this theorem will
follow from the refined asymptotic result in \cite{DKP}. Here the
point is, we can prove Theorem \ref{main result 2} and Theorem
\ref{main result 3} at the same time. This will be achieved by
adapting Gui's method in \cite{Gui} to the multiple interfaces
setting.

It should be mentioned that it is conjectured that finite Morse
index solutions of \eqref{Allen-Cahn} satisfies the energy growth
bound \eqref{linear energy growth}. On the other hand, if a solution
satisfies the conclusion of Theorem \ref{main result 3}, it has
finite Morse index (see \cite{KLP 1}).

In this paper, a point in $\R^2$ is denoted by $X=(x,y)$.

The organization of this paper is as follows. In Section 2 we prove
Theorem \ref{main result 1}. Theorem \ref{main result 2} and Theorem
\ref{main result 3} is proved in Section 3 at the same time.

\section{Uniqueness of the blowing down limit}
\setcounter{equation}{0}

By direct integration by parts, we get the stationary condition
\[\int_{\R^2}\left[\frac{1}{2}|\nabla u|^2+W(u)\right]\mbox{div}X-DX(\nabla u,\nabla u)=0,
 \quad \forall X\in C_0^\infty(\R^2,\R^2).\]
 Following \cite{Smrynelis}, this condition implies the
existence of a function $U\in C^3(\R^2)$ satisfying
\begin{align*}
\nabla^2U=
    \begin{bmatrix}
     u_{x}^2-u_{y}^2+2W(u)       & 2u_{x}u_{y}            \\
    2u_{x}u_{y}   & u_{y}^2-u_{x}^2+2W(u)       \\
        \end{bmatrix}.
   \end{align*}
Moreover, by the Modica inequality (see \cite{Modica})
\[\frac{1}{2}|\nabla u|^2\leq W(u),\quad\mbox{in } \R^2,\]
 $U$ is convex. After subtracting
an affine function, we can assume $U(0)=0$ and $\nabla U(0)=0$.
Hence by the convexity of $U$, $U\geq 0$ in $\R^2$.

\begin{lem}
There exists a constant $C$ such that,
\[U(x,y)\leq
C\left(|x|+|y|\right),\quad\mbox{in } \ \R^2.\]
\end{lem}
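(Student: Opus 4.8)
The plan is to combine two properties of $U$: its subharmonicity, which together with the energy bound \eqref{linear energy growth} forces the averages of $U$ to grow at most linearly, and its convexity, which lets one upgrade such an averaged bound to the desired pointwise bound.

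First I would record that, since $U$ is convex, it is subharmonic; indeed, taking the trace of the explicit Hessian gives $\Delta U=4W(u)\ge 0$. Writing $\bar U(r)=\frac{1}{2\pi r}\int_{\partial B_r(0)}U\,d\mathcal{H}^1$ for the spherical average, the divergence theorem yields
\[\bar U^\prime(r)=\frac{1}{2\pi r}\int_{B_r(0)}\Delta U=\frac{2}{\pi r}\int_{B_r(0)}W(u).\]
The crucial input is \eqref{linear energy growth}, which gives $\int_{B_r(0)}W(u)\le\int_{B_r(0)}\frac12|\nabla u|^2+W(u)\le Cr$, hence $\bar U^\prime(r)\le 2C/\pi$. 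Since $\bar U(0)=U(0)=0$, integrating in $r$ gives $\bar U(r)\le (2C/\pi)r$, and integrating once more in the radial variable shows that the solid average satisfies $\frac{1}{\pi r^2}\int_{B_r(0)}U\le C^\prime r$ for all $r>0$.

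It remains to pass from this averaged bound to a pointwise bound, and this convexity step is the part requiring care. Fix $X\ne0$ and set $r=|X|$, $M=U(X)$. Convexity provides the supporting (tangent) plane $\ell(z):=U(X)+\nabla U(X)\cdot(z-X)\le U(z)$; evaluating at $z=0$ and using $U(0)=0$ gives $\nabla U(X)\cdot X\ge M$, so in particular $|\nabla U(X)|\ge M/r$. Then $\{\ell\ge M/2\}$ is a half-plane whose bounding line lies within distance $r$ of the origin (by Cauchy–Schwarz, since $0\le\nabla U(X)\cdot X-M/2\le |\nabla U(X)|\,r$), so it meets $B_{2r}(0)$ in a cap of area at least $c_0r^2$ for a universal $c_0>0$. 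On this cap $U\ge\ell\ge M/2$, whence $\frac{1}{\pi(2r)^2}\int_{B_{2r}(0)}U\ge c_1 M$. Comparing with the averaged bound $\frac{1}{\pi(2r)^2}\int_{B_{2r}(0)}U\le C^\prime(2r)$ forces $M=U(X)\le C|X|$, which with $|X|\le|x|+|y|$ gives the claim.

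The main obstacle is precisely this last step: producing a lower bound on the solid average of $U$ in terms of its value at a single point, which is the reverse of the averaged estimate just established. The supporting-plane argument is what makes it work, converting the large slope $|\nabla U(X)|\ge M/r$ forced by convexity together with $U\ge0$ into a region of definite size on which $U$ is comparably large. I expect the only technical points to be the uniform lower bound on the cap area and the bookkeeping of constants; note in particular that a purely local estimate (say from the boundedness of $\nabla^2 U$) would only yield quadratic growth, so the linear energy bound must enter exactly through the averaging in the first step.
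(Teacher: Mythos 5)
Your proposal is correct and follows essentially the same route as the paper: the averaged linear bound comes from $\Delta U=4W(u)$ together with \eqref{linear energy growth}, and convexity then upgrades this to a pointwise bound. The paper states this last step without detail ("the conclusion follows from this integral bound and the convexity of $U$"), and your supporting-plane argument is precisely a correct way to fill in that step, using $U\geq 0$ and $U(0)=0$ as the paper has arranged.
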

\begin{proof}
By definition,
\begin{equation}\label{equation for V}
\Delta U=4W(u).
\end{equation}
Then for any $R>0$,
\[\fint_{\partial B_R}U=\int_0^R\frac{d}{dr}\left(\fint_{\partial
B_r}U\right) =\int_0^R \frac{1}{2\pi r}\int_{B_r}4W(u)\leq CR,\]
where we have used \eqref{linear energy growth}.

The conclusion follows from this integral bound and the convexity of
$U$.
\end{proof}

By this linear growth bound and the convexity of $U$, as
$\varepsilon\to0$,
\[U_\varepsilon(x,y):=\varepsilon U(\varepsilon^{-1}x,\varepsilon^{-1}y)\rightarrow U_\infty(x,y)\]
uniformly on compact sets of $\R^2$. Here $U_\infty$ is a
$1$-homogeneous, nonnegative convex function. By convexity, this
limit is independent of subsequences of $\varepsilon\to0$.

Take a sequence $\varepsilon_i\to0$ such that the blowing down limit
of $u_{\varepsilon_i}$ is $\Sigma=\cup_{\alpha=1}^N \{te_\alpha:
t\geq0\}$ and the density on $\{te_\alpha: t\geq0\}$ is $n_\alpha$.
Then outside $\Sigma$, by the strict convexity of $W$ near $\pm1$,
\[|\nabla u_{\varepsilon_i}(X)|^2+W(u_{\varepsilon_i}(X))\leq Ce^{-c \varepsilon_i^{-1} dist(X,\Sigma)}.\]
Because
\begin{align*}
\nabla^2U_{\varepsilon_i}=
    \begin{bmatrix}
     \varepsilon_iu_{\varepsilon_i,x}^2-\varepsilon_iu_{\varepsilon_i,y}^2+\frac{2}{\varepsilon_i}W(u_{\varepsilon_i,})       & 2\varepsilon_iu_{\varepsilon_i,x}u_{\varepsilon_i,y}            \\
    2\varepsilon_iu_{\varepsilon_i,x}u_{\varepsilon_i,y}   & \varepsilon_iu_{\varepsilon_i,y}^2-\varepsilon_iu_{\varepsilon_i,x}^2+\frac{2}{\varepsilon_i}W(u)       \\
        \end{bmatrix},
   \end{align*}
we also have
\[|\nabla^2U_{\varepsilon_i}(X)|^2\leq Ce^{-c \varepsilon_i^{-1} dist(X,\Sigma)}.\]
Hence $\nabla^2U_\infty\equiv 0$ in $\R^2\setminus\Sigma$, that is,
$U_\infty$ is linear in every connected component of
$\R^2\setminus\Sigma$. Thus the set $\{U_\infty<1\}$ is a convex
polygon with its vertex points lying on $\Sigma$. Now it is clear
that $\Sigma$ is uniquely determined by $U_\infty$. Since $U_\infty$
is independent the choice of subsequences of $\varepsilon\to0$,
$\Sigma$ also does not dependent the choice of subsequences of
$\varepsilon\to0$.

In a neighborhood of $\{te_\alpha: t\geq0\}$, written in the
$(e_\alpha,e_\alpha^\bot)$ coordinates, the matrix valued measure
$\nabla^2U_{\varepsilon_i}dxdy$ can be written as
\begin{align*}
\nabla^2U_{\varepsilon_i}dxdy=
    \begin{bmatrix}
     \varepsilon_iu_{\varepsilon_i,e_\alpha}^2-\varepsilon_iu_{\varepsilon_i,e_\alpha^\bot}^2+\frac{2}{\varepsilon_i}W(u_{\varepsilon_i,})       & 2\varepsilon_iu_{\varepsilon_i,e_\alpha}u_{\varepsilon_i,e_\alpha^\bot}            \\
    2\varepsilon_iu_{\varepsilon_i,e_\alpha}u_{\varepsilon_i,e_\alpha^\bot}   & \varepsilon_iu_{\varepsilon_i,e_\alpha^\bot}^2-\varepsilon_iu_{\varepsilon_i,e_\alpha}^2+\frac{2}{\varepsilon_i}W(u)       \\
        \end{bmatrix}dxdy,
   \end{align*}
By Theorem \ref{thm blowing down}, after passing to the limit, we
obtain that in a neighborhood of $\{te_\alpha: t\geq0\}$, the limit
of $\nabla^2U_{\varepsilon_i}dxdy$ equals
\begin{align*}
    \begin{bmatrix}
     0       & 0            \\
   0   & 2n_\alpha\sigma_0\mathcal{H}^1\lfloor_{\{te_\alpha:t\geq0\}}      \\
        \end{bmatrix}.
   \end{align*}
Hence across the ray $\{te_\alpha:t\geq0\}$, $\nabla U_\infty$ has a
jump $2n_\alpha \sigma_0e_\alpha^\bot$. In other words, let
$e^\pm=\nabla U_\infty$ on each side of $\{te_\alpha:t\geq0\}$, then
\[e^+-e^-=2n_\alpha\sigma_0 e_\alpha^\bot.\]
Thus $n_\alpha$ is uniquely determined by $U_\infty$. This proves
Theorem \ref{main result 1}.

\section{The multiplicity one property}
\setcounter{equation}{0}

Since $u$ is assumed to have finite Morse index, it is stable
outside a compact set. Then standard argument using the stable De
Giorgi theorem gives the following
\begin{lem}\label{translation at infinity 0}
For any $X_i=(x_i,y_i)\in u^{-1}(0)\to\infty$,
\[u_i(x,y):=u(x_i+x,y_i+y)\]
converges to a one dimensional solution $g(e\cdot X)$ in
$C^2_{loc}(\R^2)$, where $e$ is a unit vector.
\end{lem}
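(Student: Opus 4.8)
The plan is to combine interior elliptic estimates with the two-dimensional classification of stable solutions. I argue along a subsequence, so the resulting unit vector $e$ may depend on the chosen subsequence of $X_i$.

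First I would set up compactness. Since $W$ is a double-well potential, the maximum principle gives $|u|\le1$, so the translates $u_i(x,y)=u(x_i+x,y_i+y)$ are uniformly bounded and each solves $\Delta u_i=W'(u_i)$. Interior Schauder estimates then bound $u_i$ uniformly in $C^{2,\alpha}_{loc}(\R^2)$, and passing to a subsequence gives $u_i\to u_\infty$ in $C^2_{loc}(\R^2)$, where $u_\infty$ solves \eqref{Allen-Cahn} on all of $\R^2$. Since $u_i(0)=u(X_i)=0$, the limit satisfies $u_\infty(0)=0$, so in particular $u_\infty\not\equiv\pm1$.

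The key step is to promote the hypothesis that $u$ is stable outside a compact set $K$ to the statement that $u_\infty$ is stable on the whole plane. Choose $R_0$ with $K\subset B_{R_0}(0)$ and fix $\varphi\in C_0^\infty(B_R(0))$. Applying the stability inequality for $u$ to the shifted test function $\varphi(\cdot-X_i)$, whose support lies in $B_R(X_i)$ and hence avoids $K$ once $|X_i|>R_0+R$, and then changing variables, shows that $u_i$ satisfies
\[\int_{\R^2}|\nabla\varphi|^2+W''(u_i)\varphi^2\geq0\]
for all large $i$. Because $W''(u_i)\to W''(u_\infty)$ locally uniformly while $\varphi$ is fixed, I can pass to the limit and obtain the same inequality for $u_\infty$. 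As $\varphi\in C_0^\infty(\R^2)$ was arbitrary, $u_\infty$ is a bounded stable solution in $\R^2$.

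Finally I would invoke the stable De Giorgi theorem in dimension two: every bounded stable solution of \eqref{Allen-Cahn} in $\R^2$ is one-dimensional, $u_\infty(X)=g(e\cdot X+c)$ for some unit vector $e$ and constant $c$, the orientation being absorbed into the sign of $e$ and the profile being forced to be the heteroclinic $g$ of \eqref{1d profile}. The normalization $u_\infty(0)=0$ together with $g(0)=0$ and the strict monotonicity $g'>0$ from \eqref{first integral} then forces $c=0$, so $u_\infty(X)=g(e\cdot X)$, as claimed. The only genuinely delicate point is the transfer of stability to the full plane in the limit; once the stable region for $u_i$ is seen to exhaust $\R^2$ as $|X_i|\to\infty$, the conclusion is immediate from the cited classification and routine elliptic compactness.
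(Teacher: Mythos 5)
Your proposal is correct and is essentially the paper's own argument: the paper simply states that ``standard argument using the stable De Giorgi theorem'' gives the lemma, and your write-up (elliptic compactness of the translates, transfer of the outside-a-compact-set stability to the full plane since $|X_i|\to\infty$, then the two-dimensional classification of bounded stable solutions plus the normalization $u_\infty(0)=0=g(0)$) is exactly that standard argument spelled out.
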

Recall the cone $\Omega_i$ introduced in Section 1. The nodal set of
$u$ in $\Omega_i$ has the following description.
\begin{lem}\label{nodal sets}
There exists an $R_1>0$ large such that, for each $i$, in
$\Omega_i\setminus B_{R_1}(0)$, $\{u=0\}$ consists of $n_i$ curves,
which can be represented by the graph of functions defined on $L_i$,
with its $C^1$ norm convergeing to $0$ at infinity.
\end{lem}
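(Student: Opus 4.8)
The plan is to first pin down the local structure of the nodal set far from the origin, then its global direction, and finally count the components.

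First I would use Lemma \ref{translation at infinity 0} to describe $\{u=0\}$ locally. For any zero $X_0$ with $|X_0|$ large, the translate $u(X_0+\cdot)$ is $C^2_{loc}$-close to a one dimensional profile $g(e\cdot X)$; since $g^\prime(0)=\sqrt{2W(0)}>0$, the gradient $|\nabla u(X_0)|$ is bounded below, and by the implicit function theorem $\{u=0\}$ is a smooth curve near $X_0$ whose tangent is $C^1$-close to $e^\perp$. The nodal set of the limit $g(e\cdot X)$ is the single line $\{e\cdot X=0\}$, so the $C^1$ convergence of nodal sets forces $\{u=0\}$ to be a single arc in each fixed ball $B_\rho(X_0)$ once $|X_0|$ is large. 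Consequently distinct connected components of $\{u=0\}$ are separated by distances tending to infinity, while away from $\{u=0\}$ one has $|u|\to1$ with exponentially small gradient (by the strict convexity of $W$ near $\pm1$, exactly as in Section 2).

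Next I would determine the direction. Because the cones $\Omega_i$ are built from the angular bisectors, $\Omega_i\cap\Sigma=L_i$; hence for any $\omega\in\Omega_i\cap S^1$ with $\omega\neq e_i$ we have $\omega\notin\Sigma$, so $u_\varepsilon\to\pm1$ near $\omega$ and $u$ has no zeros far out along $\omega$. Thus every sequence of zeros $X_k\to\infty$ in $\Omega_i$ satisfies $X_k/|X_k|\to e_i$, and the rescaled nodal sets $|X_k|^{-1}\{u=0\}$ converge to $L_i$ in the Hausdorff sense on compact subsets of $\R^2\setminus\{0\}$. Combining this concentration with the local flatness from the previous step (and the interface regularity of \cite{H-T} away from the single singular point $0$), the tangent directions of the nodal curves converge to $e_i$. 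Since the two rays $L_i^\pm$ bounding $\Omega_i$ lie off $\Sigma$, there are no zeros on them far out, so each component stays inside $\Omega_i$ and, having tangent $\to e_i$ and slope $\to0$, is globally a graph $\{te_i+\phi_j(t)e_i^\perp:\,t\ge R_1\}$ with $\phi_j^\prime(t)\to0$; this is the asserted $C^1$-smallness.

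Finally I would count the components by an energy–flux argument. By Lemma \ref{translation at infinity 0}, across each graph the solution is a single transition layer contributing asymptotically the one dimensional energy $\sigma_0$ per unit $e_i$-length, while off the graphs the energy density is exponentially small. On the other hand Theorem \ref{thm blowing down}(iii) gives $\mu_1=n_i\sigma_0\mathcal H^1\lfloor_{L_i}$ in $\Omega_i$, i.e. the total transition energy per unit $e_i$-length along $L_i$ equals $n_i\sigma_0$. Matching the two computations on a transversal to $L_i$ forces the number of graphs to be exactly $n_i$. The main obstacle is the second step: upgrading the Hausdorff convergence of the rescaled nodal sets to $C^1$ convergence of the individual curves when the multiplicity $n_i$ may exceed one. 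The two facts that make this possible are the separation of distinct components established in the first step (so that the $n_i$ sheets do not collapse onto one another and each is seen as a single transition at unit scale) and the uniqueness of the blow-down limit from Theorem \ref{main result 1} (so that the direction $e_i$ and the density $n_i$ are unambiguous along the chosen sequence $\varepsilon_k=|X_k|^{-1}$). Once these are in hand, the flatness estimate and the energy count are routine.
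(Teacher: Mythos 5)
Your Step 1 and the concentration half of Step 2 are sound: Lemma \ref{translation at infinity 0} plus the nondegeneracy $g^\prime(0)=\sqrt{2W(0)}>0$ does show that near any far-away zero the nodal set is a single $C^1$ arc, $C^1$-close to a straight segment in every ball of \emph{fixed} radius, and Theorem \ref{main result 1} does make the blow-down unambiguous, so all zeros in $\Omega_i$ concentrate along $L_i$ in the rescaled Hausdorff sense. The genuine gap is the sentence where you pass from these two facts to ``the tangent directions of the nodal curves converge to $e_i$.'' Flatness at every fixed scale together with Hausdorff concentration onto the ray does \emph{not} control the tangent at intermediate scales: a curve such as $y=\sqrt{x}\,\sin\sqrt{x}$ has curvature tending to zero (hence is $C^1$-close to a straight line in every fixed-radius ball about far points), lies in $\{|y|\le\sqrt{x}\}$ (hence its rescalings converge to the ray $\{y=0,\,x\ge0\}$), and yet its slope oscillates with amplitude $1/2$ and never converges. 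Nothing you have established rules out the nodal curves behaving this way, and the parenthetical appeal to ``the interface regularity of \cite{H-T}'' cannot fill the hole: Hutchinson--Tonegawa gives varifold convergence and integrality of the limit density, not $C^1$ control of the $\varepsilon$-scale level sets, and precisely when $n_i>1$ the limit varifold admits no graph decomposition to lean on. Your energy-matching Step 3 cannot repair this either: the convergence $\varepsilon|\nabla u_\varepsilon|^2dxdy\rightharpoonup\mu_1$ is only weak-$*$, so sparse slope excursions (say near $x=k^2$) contribute nothing to the limit measure while destroying the claimed $C^1$ convergence; moreover counting sheets through a transversal is only well defined after one knows each component is a global graph, which is exactly the point at issue.

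The paper closes this hole with an input you never invoke: Tonegawa's theorem on stable critical points (\cite[Theorem 5]{Tonegawa}). Since $u$ has finite Morse index it is stable outside a compact set, and that theorem asserts, at the \emph{macroscopic} scale, that $\{u_\varepsilon=t_\varepsilon\}\cap(B_2\setminus B_{1/2})\cap\Omega_i$ consists of exactly $n_i$ graphs $y=h_\varepsilon^\alpha(x)$ with uniformly bounded $C^{1,1/2}$ norms. Interpolating this uniform $C^{1,1/2}$ bound against the uniform convergence $h_\varepsilon^\alpha\to0$ from \cite{H-T} yields slope tending to zero at the macroscopic scale---precisely the intermediate-scale control your argument lacks---and it yields the count $n_i$ for free, with no energy bookkeeping needed; Lemma \ref{translation at infinity 0} then merely transfers these statements from the level $t_\varepsilon$ to the zero level and sharpens them at unit scale. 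In short, stability must be used at the blow-down scale (via Tonegawa's regularity theorem), not only through the unit-scale rigidity of Lemma \ref{translation at infinity 0}; that is the missing idea in your proposal.
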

\begin{proof}
Take an $\Omega_i$, which we assume to be
$\{-\lambda_-x<y<\lambda_+x\}$ for two constants $\lambda_\pm>0$.
$L_i$ is assumed to be the ray $\{x>0,y=0\}$. By \cite[Theorem
5]{Tonegawa}, for all $\varepsilon$ small, there exists a constant
$t_\varepsilon\in(-1/2,1/2)$, such that
\[\{u_\varepsilon=t_\varepsilon\}\cap (B_2\setminus B_{1/2})\cap\Omega_i\]
consists of $n_i$ curves in the form
\[y=h_\varepsilon^\alpha(x), \quad\mbox{for } 1/2\leq x\leq 2,\quad 1\leq\alpha\leq n_i,\]
where $\|h_\varepsilon^\alpha\|_{C^{1,1/2}[1/2,2]}$ is uniformly
bounded. By \cite{H-T}, for each $\alpha$, $h_\varepsilon^\alpha$
converges to $0$ uniformly on $[1/2,2]$ as $\varepsilon\to0$.

By Lemma \ref{translation at infinity 0}, for each $t\in[-3/4,3/4]$,
$\{u_\varepsilon=t\}$ consists of $n_i$ curves, in the form
\[y=h_\varepsilon^\alpha(x,t), \quad\mbox{for } 1/2\leq x\leq
2,\quad 1\leq\alpha\leq n_i,\]
 which lies in an $O(\varepsilon)$ neighborhood of
 $\{u_\varepsilon=t_\varepsilon\}$. Moreover, after a scaling and
 using Lemma \ref{translation at infinity 0}, we get
 \[\lim_{\varepsilon\to0}\sup_{1/2\leq x\leq 2}\Big|\frac{d}{dx}h_\varepsilon^\alpha(x,t)\Big|=0.\]
Rescaling back to $u$ we conclude the proof.
\end{proof}

Now we are in the following situation:
\begin{enumerate}
\item[{\em (H1)}] There are two positive constants $R>0$ large and
$\lambda>0$.
\item[{\em (H2)}] The domain $\mathcal{C}:=\{(x,y): |y|<\lambda x,
x>R\}$.
\item[{\em (H3)}] $u\in C^2(\overline{\mathcal {C}})$ satisfies
\eqref{Allen-Cahn} in $\mathcal{C}$.
\item[{\em (H4)}] $\{u=0\}$ consists of $N$ curves
$\{y=f_i(x)\}$, $1\leq i\leq N$, where $f_i\in C^\infty[R,+\infty)$
satisfying
\[f_1<f_2<\cdots<f_N,\]
\[\lim_{x\to+\infty}f_i^\prime(x)=0,\quad\forall 1\leq i\leq N.\]
\end{enumerate}
The last condition implies that
\[\lim_{x\to+\infty}\frac{|f_i(x)|}{|x|}=0,\quad\forall 1\leq i\leq N.\]

The main goal in this section is to prove
\begin{thm}\label{thm 3.1}
We must have $N=1$. Moreover, there exists a constant $t$ such that
\[\big|f(x)-t\big|\leq Ce^{-\frac{x}{C}},\]
and
\[\sup_{-\lambda x<y<\lambda x}\big|u(x,y)-g(y-t)|\leq Ce^{-\frac{x}{C}},\]
 where the constant $C$ depends only on $W$.
\end{thm}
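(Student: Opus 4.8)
The plan is to combine Gui's Hamiltonian identity with the stability inequality, treating the non-degeneracy ($N=1$) and the exponential convergence as two facets of the same asymptotic analysis. First I would use Lemma~\ref{translation at infinity 0} and Lemma~\ref{nodal sets} to set up a precise one-dimensional approximation: for $x$ large the interfaces $y=f_i(x)$ are nearly horizontal ($f_i'\to0$) and mutually separated, and on a neighborhood of each curve $u$ is $C^2$-close to $\pm g(y-f_i(x))$, the signs alternating with $i$ because $u$ must return to its asymptotic value $\pm1$ between consecutive zeros. I would record quantitative bounds for the error in this approximation in terms of the gaps $d_i(x):=f_{i+1}(x)-f_i(x)$ and the slopes $f_i'(x)$; away from the union of the interfaces $u$ is exponentially close to $\pm1$ by the strict convexity of $W$ near $\pm1$, exactly as used in Section~2.

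The central tool is the truncated Hamiltonian
\[
H(x):=\int_{-\lambda x}^{\lambda x}\Big[\tfrac12 u_x^2-\tfrac12 u_y^2-W(u)\Big]\,dy .
\]
Differentiating and integrating by parts using $\Delta u=W'(u)$, every resulting term is a boundary contribution on $y=\pm\lambda x$; since these lines stay at distance $\asymp x$ from all interfaces, $u\to\mp1$ there with exponential rate, so $H'(x)=O(e^{-x/C})$ and $H(x)$ converges to a limit $H_\infty$ with $|H(x)-H_\infty|\le Ce^{-x/C}$. Inserting the one-dimensional approximation and using the first integral $\tfrac12(g')^2=W(g)$, the transverse and potential parts contribute $-\sigma_0$ per interface while the $u_x$ part contributes $\tfrac12\sigma_0|f_i'|^2$ to leading order; hence $H(x)=-\,N\sigma_0+\tfrac12\sigma_0\sum_i|f_i'(x)|^2+(\text{interaction})+o(1)$.

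Ruling out $N\ge2$ is where the finite Morse index enters, and this is the main obstacle. Between two consecutive, oppositely oriented interfaces the slice $u(x,\cdot)$ is a non-monotone (bump) profile, and such profiles are linearly unstable for the one-dimensional operator $-\partial_{yy}+W''$: the approximate zero mode $u_y$ changes sign, so the principal eigenvalue is strictly negative. I would make this quantitative for the approximate slice and then build a two-dimensional competitor $\varphi(x,y)=\eta(x)\psi(x,y)$, with $\psi$ the unstable transverse eigenfunction and $\eta$ a slowly varying cutoff supported on $\{a<x<2a\}$; for $a$ large the transverse instability dominates the $O(a^{-1})$ cost of $\eta'$, giving $\int|\nabla\varphi|^2+W''(u)\varphi^2<0$ with $\mathrm{spt}\,\varphi\subset\R^2\setminus K$ for any prescribed compact $K$. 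This contradicts stability outside a compact set, so $N=1$. Equivalently, this can be phrased as a reduced second variation: the attractive interaction between oppositely oriented interfaces renders the quadratic form in the interface displacements negative, which stability forbids.

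Once $N=1$ there is no interaction term, and the Hamiltonian relation collapses to $\tfrac12\sigma_0|f'(x)|^2=H(x)-H_\infty+o(|f'|^2)=O(e^{-x/C})$, whence $|f'(x)|\le Ce^{-x/C}$ and $f(x)\to t$ with $|f(x)-t|\le Ce^{-x/C}$. Finally I would upgrade this to the full estimate on $u$: writing $u=g(y-t)+v$, the difference $v$ solves a linear equation whose potential $W''(u)$ is, away from the interface, bounded below by a positive constant by the strict convexity of $W$ at $\pm1$; a standard barrier or weighted-energy argument on the cone then propagates the exponential smallness of $v$ from the interface region to all of $\{-\lambda x<y<\lambda x\}$, yielding $\sup|u(x,y)-g(y-t)|\le Ce^{-x/C}$.
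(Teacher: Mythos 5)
Your overall frame (one-dimensional approximation with alternating layers, Hamiltonian identity, spectral input) is the right one, but the step that actually rules out $N\geq 2$ has a genuine quantitative gap, and it is precisely the step where the paper takes a different route. Your destabilizing test function $\varphi=\eta(x)\psi(x,y)$ must beat the cutoff cost, which after normalizing $\int\psi^2\,dy=1$ is of order $1/a$ on $\{a<x<2a\}$. But the transverse instability you invoke is produced by the interaction of consecutive, oppositely oriented layers, and its size is exponentially small in the separation: the principal eigenvalue of $-\partial_{yy}+W''(u(x,\cdot))$ is at best of order $-e^{-c\,d_i(x)}$, where $d_i(x)=f_{i+1}(x)-f_i(x)$. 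By Lemma \ref{lem 3.3}, $d_i(x)\to+\infty$, and a priori you only know $d_i(x)=o(x)$; if, say, $d_i(x)\sim\sqrt{x}$, then $\int_a^{2a}e^{-c\,d_i(x)}\,dx\ll 1/a$ and the negative term does not dominate the cutoff cost. Your argument closes only if you first prove $d_i(x)\leq C\log x$, i.e.\ a lower bound on the interaction, and no such bound is available at this stage (you cannot appeal to Toda-system asymptotics, since the claim being proved is that no such configuration exists at all). There is a second, related gap: the negative eigenvalue of the glued two-layer profile is exponentially small, while your control of the difference between $u(x,\cdot)$ and that profile is only qualitative ($o(1)$ in $C^2_{loc}$ near the layers plus exponential decay away from them), so you cannot even conclude that the principal eigenvalue of the operator linearized at the \emph{actual} slice is negative.

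The paper avoids this entirely: stability is never used to produce an unstable direction. Finite Morse index enters only through the setup (Lemma \ref{translation at infinity 0}, Lemma \ref{nodal sets}, Lemma \ref{lem 3.3}), and the main proof then runs, for \emph{arbitrary} $N$, an $L^2$ argument: fit optimal layer positions $t_i(x)$ by minimizing $F(x;t_1,\dots,t_N)$, use the resulting orthogonality \eqref{3.2} together with the spectral gap of Proposition \ref{second eigenvalue} and the Hamiltonian identity (Proposition \ref{Hamiltonian identity 2}) to derive the differential inequality $\frac{d^2}{dx^2}\|v\|^2\geq\left(\mu+o(1)\right)\|v\|^2$ for $v=u-g(\cdot;t_i(x))$, hence $\|v\|^2\leq Ce^{-cx}$ and then $|t_i'(x)|\leq Ce^{-cx}$. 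Thus every $t_i(x)$, hence every $f_i(x)$, converges to a finite limit, and for $N\geq 2$ this contradicts the divergence of the gaps in Lemma \ref{lem 3.3}; the contradiction is with the geometry of the nodal set, not with the stability inequality. Note also that your final step for $N=1$ has the same soft spot as the instability step: the expansion $H(x)=-\sigma_0+\frac{\sigma_0}{2}|f'(x)|^2+o(|f'|^2)$ is unjustified, because the error contains the deviation of $u$ from the sliding profile $g(y-f(x))$, which is not controlled by $|f'|^2$; obtaining exponential decay of that deviation is exactly what the orthogonality-plus-spectral-gap differential inequality is needed for, even when $N=1$.
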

Theorem \ref{main result 2} and \ref{main result 3} follow from this
theorem, Theorem \ref{thm blowing down}, Theorem \ref{main result 1}
and Lemma \ref{nodal sets}.

Possibly by a change of sign, assume $u<0$ in $\{y<f_1(x)\}$.

\begin{lem}\label{translation at infinity}
For any $1\leq i\leq N$ and $t\to+\infty$,
\[u^t(x,y):=u(t+x, f_i(t)+y)\]
converges to $g(y)$ in $C^2_{loc}(\R^2)$.
\end{lem}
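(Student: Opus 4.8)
The plan is to prove Lemma \ref{translation at infinity} by combining the one‑dimensional convergence along the nodal sets (Lemma \ref{translation at infinity 0}) with the structure of the limiting interfaces, and then to upgrade the pointwise identification of the limit near the $i$-th nodal curve. Fix $i$ and consider the sequence $t\to+\infty$. Since $(t,f_i(t))$ lies on the nodal set $\{u=0\}$ and tends to infinity, Lemma \ref{translation at infinity 0} already guarantees that, up to a subsequence, $u^t(x,y)=u(t+x,f_i(t)+y)$ converges in $C^2_{loc}(\R^2)$ to a one‑dimensional profile $g(e\cdot X)$ for some unit vector $e$. The content of the present lemma is that $e$ must equal the fixed vertical direction $(0,1)$, with the correct sign, so that the limit is exactly $g(y)$ and is independent of the subsequence.

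First I would pin down the direction $e$. By (H4), the nodal curve $\{y=f_i(x)\}$ has $f_i'(x)\to0$ as $x\to+\infty$; after the translation and in the limit, the translated nodal sets converge locally to the zero set of the limiting profile, which for $g(e\cdot X)$ is the straight line $\{e\cdot X=0\}$. Because the graph $\{y=f_i(t+x)-f_i(t)\}$ passes through the origin and has slope $f_i'(t+x)\to0$ uniformly on compacts (this uses $f_i'\to0$ together with the graphical control from Lemma \ref{nodal sets}), the limiting nodal line is horizontal, forcing $e=\pm(0,1)$. To fix the sign I would use the normalization already recorded before Lemma \ref{translation at infinity}, namely $u<0$ in $\{y<f_1(x)\}$, together with the ordering $f_1<f_2<\cdots<f_N$: on the component of $\{u=0\}^c$ immediately below the $i$-th curve $u$ takes one definite sign and above it the opposite sign, and this sign pattern is preserved in the $C^2_{loc}$ limit. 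Comparing with the monotonicity $g'>0$ from \eqref{first integral} selects $e=(0,1)$, so the limit is $g(y)$.

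Next I would argue that the limit is the \emph{full} profile $g$ rather than some other one‑dimensional solution. This is where Lemma \ref{translation at infinity 0} does the real work: its conclusion is that every blow‑up at a diverging nodal point is a genuine heteroclinic $g(e\cdot X)$, not a constant or a trivial solution, because stability outside a compact set together with the stable De~Giorgi theorem rules out any solution with a nontrivial second variation. Since the translated sequence is uniformly bounded in $C^2_{loc}$ (by elliptic estimates and the $L^\infty$ bound coming from the maximum principle for \eqref{Allen-Cahn}) and every subsequential limit is forced to be the same function $g(y)$, the whole family $u^t$ converges to $g(y)$ in $C^2_{loc}(\R^2)$ as $t\to+\infty$, with no subsequence extraction needed.

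The main obstacle I expect is the rigidity step that excludes a tilted or spurious limit: a priori one only knows $f_i'(t)\to0$, and one must ensure that the translated nodal curve does not develop oscillations whose slopes stay bounded away from zero along a subsequence, which would allow a tilted limit $g(e\cdot X)$ with $e\neq(0,1)$. Controlling this requires the uniform $C^{1,1/2}$ graphical estimates on the nodal set from Lemma \ref{nodal sets} (equivalently from \cite{Tonegawa} and \cite{H-T}) so that the convergence of the curves is genuinely $C^1$ and not merely $C^0$; with that in hand the limiting nodal set is exactly horizontal and the identification of $e$ is unambiguous. The remaining verifications — the local uniform bounds, the sign determination, and the independence of the limit from the subsequence — are then routine consequences of the maximum principle, elliptic regularity, and the uniqueness of the heteroclinic $g$ up to translation.
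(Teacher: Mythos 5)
Your proposal is correct and takes essentially the same route as the paper: both combine Lemma \ref{translation at infinity 0} (subsequential convergence to some $g(e\cdot X)$) with the observation that the translated nodal curve $f_i(\cdot+t)-f_i(t)$ passes through the origin with slope tending to zero, forcing the limiting nodal line to be $\{y=0\}$ and hence the limit to be $g(y)$, with full convergence following because every subsequential limit is the same. The only difference is cosmetic: you invoke the $C^{1,1/2}$ graphical bounds of Lemma \ref{nodal sets} to exclude tilted limits, whereas the paper reads this off directly from hypothesis \emph{(H4)}, i.e. $f_i'(x)\to 0$, which already rules out slopes staying bounded away from zero along any subsequence.
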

\begin{proof}
This is a consequence of Lemma \ref{translation at infinity 0} and
Lemma \ref{nodal sets}. Note that $\{u^t=0\}=\{y=f^t(x)\}$ where
$f^t(x):=f_i(x+t)-f_i(t)$. As $t\to+\infty$, $\frac{df^t}{dx}$
converges to $0$ uniformly on any compact set of $\R$. Hence by
noting that $f^t(0)=0$, $f^t$ also converges to $0$ uniformly on any
compact set of $\R$. This implies that the limit $u^\infty=0$ on
$\{y=0\}$. From this we see $u_\infty(x,y)\equiv g(y)$. Since this
limit is independent of subsequences of $t\to+\infty$, we finish the
proof.
\end{proof}

\begin{lem}\label{exponential decay}
In $\overline{\mathcal{C}}$,
\[1-u(x,y)^2\leq Ce^{-c\min_i(y-f_i(x))}.\]
\end{lem}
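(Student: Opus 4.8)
The plan is to show that $\phi:=1-u^2$ satisfies a linear differential inequality with a spectral gap wherever it is small, and then to feed this into a comparison (barrier) argument producing exponential decay in the distance to $\{u=0\}$. Since $|u|\le1$ we have $\phi\ge0$, and above the topmost nodal curve $\min_i(y-f_i(x))=y-f_N(x)$ is comparable to the distance from $(x,y)$ to $\{u=0\}$; so it suffices to prove $\phi(X)\le Ce^{-c\,\mathrm{dist}(X,\{u=0\})}$.

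First I would compute, using $\Delta u=W'(u)$,
\[
\Delta\phi=-2|\nabla u|^2-2uW'(u).
\]
Since $W$ is a double well with $W(\pm1)=W'(\pm1)=0$ and $W''(\pm1)>0$, a Taylor expansion near $u=\pm1$ gives $-2uW'(u)\ge(\mu-o(1))\phi$ with $\mu:=\min(W''(1),W''(-1))>0$, where $o(1)\to0$ as $\phi\to0$. The gradient term is controlled by the Modica inequality $|\nabla u|^2\le2W(u)$ together with $W(u)\le C\phi^2$ near $\pm1$, so $-2|\nabla u|^2\ge-C\phi^2$. Combining, there are $\delta_0>0$ and $c_0:=\mu/2>0$ with
\[
\Delta\phi\ge c_0\phi \quad\text{on } \{\phi<\delta_0\}.
\]
Thus $\phi$ is a subsolution of $\Delta-c_0$, which enjoys the maximum principle, precisely on the region where $u$ is close to $\pm1$.

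Next I would record the qualitative input that localizes us to that region: by Theorem \ref{thm blowing down}(i) and Lemma \ref{translation at infinity 0}, there is $d_0>0$ such that $\phi<\delta_0$ at every point whose distance to $\{u=0\}$ exceeds $d_0$. Now fix $X_0$ with $d:=\mathrm{dist}(X_0,\{u=0\})>d_0$ and set $\rho:=d-d_0$. On the ball $B_\rho(X_0)$ every point lies at distance $>d_0$ from $\{u=0\}$, so $\Delta\phi\ge c_0\phi$ holds throughout $B_\rho(X_0)$ while $\phi\le\delta_0$ on $\partial B_\rho(X_0)$. Comparing $\phi$ with the radial solution of $\Delta v=c_0v$ in $B_\rho$ equal to $\delta_0$ on the boundary (a rescaled modified Bessel function $I_0$), the maximum principle yields $\phi(X_0)\le v(0)\le C\delta_0\,e^{-\sqrt{c_0}\,\rho}$. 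Since above the top curve $d\ge c''(y-f_N(x))$ for some $c''>0$ (the neighbouring interfaces are angularly separated, hence at distance comparable to $x$), this gives the claim. Alternatively one can use the translation-type barrier $\bar\phi=Me^{-c(y-f_N(x))}$; a direct computation shows it is a supersolution of $\Delta-c_0$ once $c<\sqrt{c_0}$ and $x$ is large, using that $f_N',f_N''\to0$ (the $C^2_{loc}$ convergence of Lemma \ref{translation at infinity} flattens the nodal curves).

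The main obstacle is the interplay in the first two steps: the favourable inequality $\Delta\phi\ge c_0\phi$ is only available after $u$ has been shown to be close to $\pm1$, so the argument must first use the blow-down and translation results to confine attention to $\{\phi<\delta_0\}$ and only then run the comparison. A secondary technical point is boundary control: the ball argument requires $u$ to solve the equation on balls that may leave $\mathcal{C}$ (handled by working with the entire solution), while the explicit barrier $\bar\phi$ instead requires controlling $\phi$ on the lateral edges of the cone, for which one invokes the exponential decay away from $\Sigma$ established in Section 2.
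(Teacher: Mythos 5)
Your proposal is correct and follows essentially the same route as the paper: localize to the region where $u$ is close to $\pm 1$ using the translation lemmas, derive there a linear differential inequality of the form $\Delta(\cdot)\geq c_0(\cdot)$, and convert it into exponential decay via the maximum principle with explicit barriers. The differences are cosmetic: the paper applies this scheme to $W(u)$, whose strict convexity near $\pm 1$ makes the gradient term in $\Delta W(u)=W^{\prime\prime}(u)|\nabla u|^2+|W^\prime(u)|^2$ harmless (so Modica's inequality is not needed, and decay of $W(u)$ gives decay of $1-u^2$ since $W(u)\geq c\left(1-u^2\right)^2$ near $\pm 1$), whereas you apply it to $1-u^2$ directly (hence need Modica to absorb $-2|\nabla u|^2$) and run the comparison on balls against the global distance to $\{u=0\}$ rather than inside the cone, which conveniently sidesteps the lateral-boundary issue.
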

\begin{proof}
By the previous lemma, for any $M>0$, if we have chosen $R$ large
enough, $u^2>1-\sigma(M)$ in $\{(x,y): |y-f_i(x)|>M, \forall i\}$,
where $\sigma(M)$ is a constant depending on $M$ satisfying
$\lim_{M\to+\infty}\sigma(M)=0$. By choosing $M$ large (then
$\sigma(M)$ can be made small so that $W$ is strictly convex in
$(1-\sigma(M),1)$), in $\{(x,y): |y-f_i(x)|>M,\forall i\}$,
\[\Delta W(u)\geq cW(u).\]
From this we deduce the exponential decay
\[W(u)\leq
Ce^{-c dist(X, \cup_i\{(x,y): |y-f_i(x)|<M\})}.\]
 Finally,
because $|f_i^\prime(x)|<1$, the distance to $\{y=f_i(x)\}$ is
comparable to $|y-f_i(x)|$. This finishes the proof.
\end{proof}
As a consequence,
\begin{equation}\label{5.1}
1-u(x,y)^2\sim O(e^{-cx})\quad \mbox{on}\ \{y=\pm\lambda x\}.
\end{equation}

Another consequence of this exponential decay is:
\begin{coro}\label{exponential decay derivatives}
In $\mathcal{C}$,
\[|u_x(x,y)|+|u_{xx}(x,y)|\leq Ce^{-\frac{\min_i(y-f_i(x))}{C}}.\]
\end{coro}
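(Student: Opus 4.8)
The plan is to bootstrap the scalar decay already proved in Lemma \ref{exponential decay} into decay of the $x$-derivatives, by differentiating \eqref{Allen-Cahn} and comparing the result against an exponential barrier. First I would record a uniform bound: since $|u|\le 1$ and $W$ is smooth, interior Schauder estimates for \eqref{Allen-Cahn} on unit balls give a constant $C_0$ with $|u_x|+|u_{xx}|\le C_0$ on $\mathcal{C}$. Consequently it is enough to prove the exponential estimate at points $X_0=(x_0,y_0)$ for which $d_0:=\min_i\big(y_0-f_i(x_0)\big)$ is large; where $d_0$ is bounded the factor $e^{-d_0/C}$ is bounded below and the uniform bound already suffices.

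Next I would differentiate \eqref{Allen-Cahn} in $x$ to obtain the linearized equation $\Delta u_x=W''(u)\,u_x$. Since $u_x$ changes sign and $W''(u)$ is negative near the nodal set, the maximum principle cannot be applied to $u_x$ directly; instead I would work with $h:=u_x^2\ge0$, which satisfies
\[\Delta h=2|\nabla u_x|^2+2W''(u)\,h\ \ge\ 2W''(u)\,h.\]
By Lemma \ref{exponential decay}, $u$ is exponentially close to $\pm1$ wherever $d$ is large, and since $W''(\pm1)>0$ there is a threshold beyond which $W''(u)\ge c_0>0$. Hence, for $X_0$ with $d_0$ large, every point of the ball $B_{d_0/2}(X_0)$ still lies in $\{W''(u)\ge c_0\}$, and on this ball $h$ is a nonnegative subsolution of $\Delta h\ge 2c_0\,h$.

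On $B_{d_0/2}(X_0)$ I would then compare $h$ with the radial supersolution
\[\Phi(X):=C_0^2\,\frac{I_0\!\left(\sqrt{2c_0}\,|X-X_0|\right)}{I_0\!\left(\sqrt{2c_0}\,d_0/2\right)},\]
where $I_0$ is the modified Bessel function solving $\Delta\Phi=2c_0\Phi$. As $\Phi=C_0^2\ge h$ on $\partial B_{d_0/2}(X_0)$ and $\Delta(h-\Phi)\ge 2c_0(h-\Phi)$, the maximum principle for $\Delta-2c_0$ yields $h\le\Phi$ in the ball, so $h(X_0)\le \Phi(X_0)=C_0^2/I_0(\sqrt{2c_0}\,d_0/2)\le Ce^{-c d_0}$, using $I_0(s)\ge c\,e^s/\sqrt{1+s}$. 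Thus $|u_x(X_0)|=\sqrt{h(X_0)}\le Ce^{-d_0/C}$. For the second derivative I would bootstrap: applying the interior gradient estimate to $\Delta u_x=W''(u)u_x$ on a unit ball gives $|u_{xx}(X)|\le|\nabla u_x(X)|\le C\sup_{B_1(X)}|u_x|$, and since $d$ is $1$-Lipschitz this is again $\le Ce^{-d_0/C}$. Because $|f_i'|<1$ makes $d_0$ comparable to the Euclidean distance to the nodal set (as in the proof of Lemma \ref{exponential decay}), this is exactly the asserted bound.

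The step I expect to be the main obstacle is verifying that the comparison ball $B_{d_0/2}(X_0)$ really lies both in the coercivity region $\{W''(u)\ge c_0\}$ and in the domain where \eqref{Allen-Cahn} holds, the delicate case being when $X_0$ is far from all interfaces $\{y=f_i(x)\}$ yet close to the lateral boundary $\{y=\pm\lambda x\}$ of $\mathcal{C}$. This is resolved by using that $u$ is a globally defined solution whose entire nodal set remains confined to bounded neighborhoods of the rays $L_i$, so that Lemma \ref{exponential decay}, applied also in the adjacent cones, forces $u$ to be exponentially close to $\pm1$ throughout $B_{d_0/2}(X_0)$. With this in hand, the remaining ingredients—the uniform Schauder bound, the Bessel barrier, and the gradient-estimate bootstrap for $u_{xx}$—are routine.
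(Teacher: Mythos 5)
Your argument is correct, but it takes a genuinely different and considerably heavier route than the paper's, whose entire proof is the one line ``this follows from standard gradient estimates.'' What the paper means is that the scalar decay of Lemma \ref{exponential decay} transfers to derivatives through interior elliptic estimates on balls of \emph{fixed} radius: in the region where $u$ is close to $\pm1$, write $\phi:=1\mp u$, note $\Delta\phi=c(X)\phi$ with $c(X)=\mp W^\prime(u)/(1\mp u)$ bounded and H\"older (since $W^\prime(\pm1)=0$, $W$ is smooth and $\nabla u$ is bounded), and apply interior gradient/Schauder estimates on $B_1(X)$ to get $|\nabla u|+|\nabla^2 u|\leq C\sup_{B_1(X)}|\phi|\leq Ce^{-\min_i|y-f_i(x)|/C}$, using that $\min_i|y-f_i(x)|$ varies by a bounded amount over a unit ball because $|f_i^\prime|<1$. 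You instead re-derive the exponential smallness from scratch: differentiating \eqref{Allen-Cahn}, passing to $h=u_x^2$ with $\Delta h\geq 2W^{\prime\prime}(u)h$, and comparing with a Bessel barrier on the large ball $B_{d_0/2}(X_0)$ inside the coercivity region $\{W^{\prime\prime}(u)\geq c_0\}$; all the individual steps check out (the subsolution inequality, the maximum principle for $\Delta-2c_0$, the $I_0$ asymptotics, and the bootstrap to $u_{xx}$ via the linearized equation). The trade-off is exactly the obstacle you flagged: your comparison region has radius growing like $d_0$, so for points near the lateral boundary $\{y=\pm\lambda x\}$, where $d_0\sim\lambda x_0$ can be huge while the distance to $\partial\mathcal{C}$ is zero, you must step outside the cone and invoke the entire solution together with the decay in the adjacent cones. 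That patch is legitimate in the actual application (Lemma \ref{nodal sets} puts every cone $\Omega_i$ in the situation (H1)--(H4), and these cones cover a neighborhood of infinity), but it is not available within the abstract hypotheses (H1)--(H4) under which Theorem \ref{thm 3.1} and this corollary are formulated, whereas the paper's fixed-radius argument only ever needs a unit ball and so degrades gracefully there. In short: your proof is sound, but the barrier construction essentially reproves the standard interior derivative estimate at the cost of a global input that the short proof does not require.
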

This follows from standard gradient estimates.

This exponential decay implies that
\begin{equation}\label{5.01}
\int_{-\lambda x}^{\lambda x}u_{x}(x,y)^2+u_{xx}(x,y)^2dy\leq
C,\quad\forall x>R.
\end{equation}

\begin{lem}\label{lem 3.3}
For any $1\leq i\leq N-1$,
\[\lim_{x\to+\infty}\left(f_{i+1}(x)-f_i(x)\right)=+\infty.\]
\end{lem}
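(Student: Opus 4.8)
The plan is to show that two consecutive nodal lines cannot stay at bounded distance by deriving a contradiction with stability. Suppose to the contrary that $\liminf_{x\to+\infty}(f_{i+1}(x)-f_i(x))<+\infty$ along some sequence $x_j\to+\infty$. Then along this sequence the two interfaces $\{y=f_i\}$ and $\{y=f_{i+1}\}$ remain within bounded distance $d_j\leq D$. The strategy is to translate and take limits: set $u^{(j)}(x,y):=u(x_j+x, f_i(x_j)+y)$ and pass to a $C^2_{loc}$ limit $u^\infty$.

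First I would analyze the limit profile $u^\infty$. By Lemma \ref{translation at infinity}, the translated solution sees the interface $\{y=f_i\}$ converging to $g(y)$; but here the distance $f_{i+1}(x_j)-f_i(x_j)$ stays bounded, so up to a further subsequence it converges to some finite $d_\infty\in[0,D]$. The key point is that between two consecutive zeros the sign of $u$ must change, so $u^\infty$ is a one-dimensional solution (depending only on $y$, using Corollary \ref{exponential decay derivatives} which kills the $x$-derivatives in the limit) having \emph{at least two zeros}, at $y=0$ and at $y=d_\infty$. However, the only bounded entire solutions of the one-dimensional Allen-Cahn ODE $g''=W'(g)$ with values in $[-1,1]$ are the monotone heteroclinic $g(y-t)$ (which has a single zero) and the constants $\pm1$. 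A bounded solution with two distinct zeros and the correct asymptotics is therefore impossible, unless $d_\infty=0$, i.e. the two interfaces actually collide.

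The remaining case $d_\infty=0$ (the interfaces merging in the limit) is where the \textbf{main obstacle} lies: the naive blow-up loses the information distinguishing the two zeros, so one must rescale at the natural length scale separating them. Here I would apply the stability hypothesis directly. Because $u$ is stable outside a compact set and $\mathcal{C}\setminus B_{R_1}$ lies outside it for $R$ large, the linearized operator $-\Delta - W''(u)$ is nonnegative on $C_0^\infty(\mathcal{C})$. Two nearby interfaces correspond, after the standard change of variables, to a solution of the linearized equation that changes sign, and a sign-changing solution (or a second zero of the associated profile) forces the existence of a compactly supported test function with negative energy, contradicting stability. Concretely, one can use the translation invariance in the $x$-direction together with the exponential decay \eqref{5.01} to construct a competitor, or invoke the monotonicity/moving plane structure to conclude that two stable interfaces cannot approach each other.

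Alternatively, and perhaps more cleanly, I expect the cleanest route is to exploit the function $U$ from Section 2. Since $\nabla U$ jumps by $2n_\alpha\sigma_0 e_\alpha^\bot$ across each ray and $U$ is convex, the separation of the interfaces is controlled by the geometry of the convex polygon $\{U_\infty<1\}$; consecutive interfaces with bounded separation would force two jump rays of $\nabla U_\infty$ to coincide, contradicting that they carry distinct directions $e_\alpha$. The technical heart is transferring this limiting statement back to the genuine solution $u$ at finite scale, and I anticipate the delicate estimate will be showing that the separation $f_{i+1}-f_i$ grows at a definite (in fact linear) rate rather than merely diverging; for the present lemma, divergence alone suffices, so I would stop once the contradiction with the one-dimensional classification is established.
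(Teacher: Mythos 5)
Your setup (translate by $(x_j,f_i(x_j))$ and pass to a $C^2_{loc}$ limit) is the same as the paper's, and your first case ($d_\infty>0$) does work—but only because Lemma \ref{translation at infinity} already forces the limit to be exactly $g(y)$, which vanishes only at $y=0$. The classification detour you take there is both unnecessary and incorrect as stated: the ODE $g''=W'(g)$ also admits bounded periodic solutions oscillating inside $(-1,1)$ (with infinitely many zeros), and Corollary \ref{exponential decay derivatives} does not ``kill the $x$-derivatives in the limit,'' since its right-hand side is only $O(1)$ near the interfaces; one-dimensionality of translated limits comes from stability via Lemma \ref{translation at infinity 0}, not from that corollary.

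The genuine gap is the case $d_\infty=0$, which you single out as the main obstacle and then do not resolve. The stability argument is only gestured at (no test function is constructed, and ``two stable interfaces cannot approach each other'' is precisely what is to be proved), while the proposed alternative via the convex function $U$ cannot work even in principle: all $N$ nodal curves $f_1<\dots<f_N$ in the cone $\mathcal{C}$ collapse in the blow-down onto the \emph{same} ray, carried with multiplicity $n_i$, which is entirely consistent with Theorem \ref{thm blowing down}; if the polygon $\{U_\infty<1\}$ could separate them, Theorem \ref{main result 2} would follow trivially and all of Section 3 would be unnecessary. What you are missing is that no rescaling and no extra stability input are needed: since $u^{(j)}\to g(y)$ in $C^1_{loc}$ and $g'>0$ everywhere, for large $j$ one has $\partial_y u^{(j)}(0,\cdot)\geq\frac{1}{2}\min_{[-1,1]}g'>0$ on $[-1,1]$, so $u^{(j)}(0,\cdot)$ is strictly monotone there and has at most one zero, contradicting $u^{(j)}(0,0)=u^{(j)}(0,d_j)=0$ with $d_j\to0^+$. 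This nondegeneracy observation is exactly how the paper argues, in direct (non-contradiction) form: for every $L$, once $t$ is large, $u^t>0$ on $\{x=0,\,0<y<L\}$ and $u^t<0$ on $\{x=0,\,-L<y<0\}$, whence $f_{i+1}(t)-f_i(t)\geq L$; both of your cases are handled at once by this single step.
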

\begin{proof}
By Lemma \ref{translation at infinity}, for any $t\to+\infty$,
\[u^t(x,y):=u(x+t,y+f_i(t))\]
converges uniformly to $g(y)$ on any compact set of $\R^2$.

From this we see, for any $L>0$, if $t$ is large enough, $u^t>0$ on
$\{x=0,0<y<L\}$ and $u^t<0$ on $\{x=0,-L<y<0\}$. The conclusion
follows from this claim directly.
\end{proof}

\begin{prop}\label{Hamiltonian identity 2}
For any $x>R$,
\[\int_{-\lambda x}^{\lambda x}\frac{u_y^2-u_x^2}{2}+W(u)dy=N\sigma_0+O(e^{-cx}). \]
\end{prop}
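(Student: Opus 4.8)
The plan is to treat
\[H(x):=\int_{-\lambda x}^{\lambda x}\Big[\tfrac{1}{2}\big(u_y^2-u_x^2\big)+W(u)\Big]\,dy\]
as an almost-conserved Hamiltonian in the $x$-variable and to identify its limit as $x\to+\infty$ with the energy of $N$ copies of the one-dimensional profile $g$.

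First I would differentiate $H$ in $x$. Writing $f:=\tfrac{1}{2}(u_y^2-u_x^2)+W(u)$ for the integrand and applying the Leibniz rule on the moving interval $(-\lambda x,\lambda x)$ produces two boundary terms $\lambda f(x,\pm\lambda x)$ together with the interior integral $\int_{-\lambda x}^{\lambda x}\partial_x f\,dy$. The decisive algebraic point is that, using \eqref{Allen-Cahn} to replace $W^\prime(u)$ by $\Delta u$, one checks
\[\partial_x f=u_yu_{xy}-u_xu_{xx}+W^\prime(u)\,u_x=\partial_y\big(u_xu_y\big),\]
so the interior integral collapses to $\big[u_xu_y\big]_{y=-\lambda x}^{y=\lambda x}$. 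Consequently $H^\prime(x)$ is a sum of quantities evaluated only on the two rays $y=\pm\lambda x$.

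Next I would invoke the exponential decay at the lateral boundary. On $\{y=\pm\lambda x\}$ we have $1-u^2=O(e^{-cx})$ by \eqref{5.1}, hence $W(u)=O(e^{-cx})$, while the Modica inequality $\tfrac{1}{2}|\nabla u|^2\le W(u)$ (together with Corollary \ref{exponential decay derivatives}) forces $|u_x|,|u_y|=O(e^{-cx})$ there. Therefore every term in $H^\prime(x)$ is $O(e^{-cx})$, so $H^\prime(x)=O(e^{-cx})$ and $\int_x^{\infty}|H^\prime|\,ds\le Ce^{-cx}$. This shows $H(x)$ converges to a finite limit $H_\infty$ with $|H(x)-H_\infty|\le Ce^{-cx}$; it then remains to evaluate $H_\infty$.

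Finally I would identify $H_\infty=N\sigma_0$ by a concentration-of-energy argument. Fix a large $M$ and split $H(x)$ into the $N$ windows $\{|y-f_i(x)|<M\}$ and the remainder. Since $f_{i+1}-f_i\to+\infty$ (Lemma \ref{lem 3.3}) and $f_i(x)=o(x)$, for $x$ large the windows are pairwise disjoint and contained in $\{|y|<\lambda x\}$; on the remainder the density $\tfrac{1}{2}|\nabla u|^2+W(u)$ is bounded by $Ce^{-cM}$ uniformly in $x$ by Lemma \ref{exponential decay}, so that part contributes at most $Ce^{-cM}$. On each window, Lemma \ref{translation at infinity} gives $C^2_{loc}$ convergence of $u$ to $g(\cdot-f_i(x))$; in particular $u_x\to0$ locally, so
\[\int_{|y-f_i(x)|<M}f\,dy\longrightarrow\int_{-M}^{M}\Big[\tfrac{1}{2}\big(g^\prime\big)^2+W(g)\Big]\,dt,\]
which tends to $\sigma_0$ as $M\to\infty$. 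Summing the $N$ contributions and letting first $x\to\infty$, then $M\to\infty$, yields $H_\infty=N\sigma_0$. I expect this last step to be the main obstacle: the convergence in Lemma \ref{translation at infinity} only controls fixed windows, so the tails must be absorbed by the global decay of Lemma \ref{exponential decay}, and the vanishing of the $\int u_x^2$ contribution (driven by $f_i^\prime\to0$) must be genuinely uniform across all $N$ interfaces before the limits in $x$ and $M$ can be exchanged.
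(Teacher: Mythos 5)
Your proposal is correct and follows essentially the same route as the paper: first establishing $\frac{d}{dx}\int_{-\lambda x}^{\lambda x}\frac{u_y^2-u_x^2}{2}+W(u)\,dy=O(e^{-cx})$ via the Hamiltonian identity and the lateral decay \eqref{5.1}, then identifying the limit $N\sigma_0$ by splitting into fixed-width windows around each $f_i(x)$ (handled by Lemma \ref{translation at infinity}) and a remainder controlled uniformly by Lemma \ref{exponential decay}. The limit-interchange issue you flag at the end is resolved exactly as you suggest --- the remainder bound \eqref{3.3} is uniform in $x$ and there are only finitely many windows, so one lets $x\to+\infty$ first and then the window size tend to infinity.
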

\begin{proof}
This is the Hamiltonian identity, see \cite{Gui}.

First, differentiating in $x$, integrating by parts and using
\eqref{5.1} leads to
\begin{equation}\label{3.5}
\frac{d}{dx}\int_{-\lambda x}^{\lambda
x}\frac{u_y^2-u_x^2}{2}+W(u)dy=O(e^{-cx}).
 \end{equation}
 Next, by
Lemma \ref{exponential decay}, for any $\delta>0$, there exists an
$L>0$ such that for all $x$,
\begin{equation}\label{3.3}
\int_{\{y\in(-\lambda x,\lambda x): |y-f_i(x)|>L,\forall
i\}}\frac{u_y^2-u_x^2}{2}+W(u)dy\leq\delta.
\end{equation}
While for each $i=1,\cdots, N$, by Lemma \ref{translation at
infinity}, we have
\begin{equation}\label{3.4}
\lim_{x\to+\infty}\int_{f_i(x)-L}^{f_i(x)+L}\frac{u_y^2-u_x^2}{2}+W(u)dy
=\int_{-L}^L\frac{1}{2}g^\prime(y)^2+W(g(y))dy=\sigma_0+O(\delta),
\end{equation}
where in the last step we have used the exponential convergence of
$g$ at infinity.

Combining \eqref{3.3} and \eqref{3.4}, by noting that $\delta$ can
be arbitrarily small, we get
\[\lim_{x\to+\infty}\int_{-\lambda x}^{\lambda
x}\frac{u_y^2-u_x^2}{2}+W(u)dy=N\sigma_0.\] The conclusion of this
lemma follows by combining this identity and \eqref{3.5}.
\end{proof}

\begin{prop}\label{second eigenvalue}
There exist two constants $L_0>0$ and $\mu>0$ so that the following
holds. For any constants $L^+>L_0$ and $L^->L_0$ and $v\in
H^1(-L^-,L^+)$ satisfying
\begin{equation}\label{constraint}
\int_{-L^-}^{L^+}v(t)g^\prime(t)dt=0,
\end{equation}
we have
\begin{equation}\label{positive second eigenvalue}
\int_{-L^-}^{L^+}\Big|\frac{dv}{dt}(t)\Big|^2+W^{\prime\prime}(g(t))v(t)^2dt\geq\mu\int_{-L^-}^{L^+}v(t)^2dt.
\end{equation}
\end{prop}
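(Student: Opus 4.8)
The plan is to deduce this uniform spectral gap on growing finite intervals from the corresponding nondegeneracy statement on the whole line, via a contradiction-and-compactness argument. On $\R$ the Schr\"odinger operator $L:=-\frac{d^2}{dt^2}+W''(g(t))$ has $g'$ in its kernel: differentiating \eqref{1d profile} gives $(g')''=W''(g)g'$, and by \eqref{first integral} $g'>0$. A positive kernel element is the ground state, so the bottom of the spectrum of $L$ is $0$, it is a simple eigenvalue, and $\ker L=\mathrm{span}(g')$. Moreover, since $g(t)\to\pm1$ exponentially and $W$ is strictly convex near $\pm1$, one has $W''(g(t))\to W''(\pm1)>0$, so the essential spectrum of $L$ begins at $\kappa:=\min(W''(1),W''(-1))>0$ and $0$ is isolated. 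Consequently $\int_\R |w'|^2+W''(g)w^2\ge 0$ for all $w\in H^1(\R)$, with equality only on $\mathrm{span}(g')$; equivalently, there is $\lambda_2>0$ with $\int_\R|w'|^2+W''(g)w^2\ge\lambda_2\int_\R w^2$ whenever $\int_\R wg'=0$. Proposition \ref{second eigenvalue} asserts that this coercivity persists, uniformly, on all sufficiently long intervals.

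Suppose the conclusion fails. Then for each $k$ I can find $L_k^\pm>k$ and $v_k\in H^1(-L_k^-,L_k^+)$ with $\int_{-L_k^-}^{L_k^+}v_kg'=0$, normalized so that $\int v_k^2=1$, and with $\int_{-L_k^-}^{L_k^+}|v_k'|^2+W''(g)v_k^2<\frac1k$. I would fix $T$ so large that $W''(g(t))\ge\kappa/2$ for $|t|\ge T$, and set $M:=\sup_\R|W''(g)|<\infty$ and $a_k:=\int_{-T}^Tv_k^2$. Splitting the quadratic form across $|t|=T$ and discarding $\int|v_k'|^2\ge0$ gives $\frac1k>\int_{-L_k^-}^{L_k^+}|v_k'|^2+W''(g)v_k^2\ge -Ma_k+\frac\kappa2(1-a_k)$, whence $a_k\ge\delta_0>0$ for $k$ large; in the same way $\int|v_k'|^2\le M+1$. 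Thus no $L^2$ mass escapes to infinity and the $v_k$ are bounded in $H^1$ on every fixed $[-m,m]$.

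By Rellich and a diagonal extraction, a subsequence of $v_k$ converges weakly in $H^1_{loc}(\R)$ and strongly in $L^2_{loc}(\R)$ to some $v_\infty$. Strong $L^2(-T,T)$ convergence gives $\int_{-T}^Tv_\infty^2=\lim_k a_k\ge\delta_0$, so $v_\infty\not\equiv0$; Fatou together with $\int v_k^2=1$ gives $v_\infty\in H^1(\R)$. The constraint passes to the limit because $g'$ is exponentially small at infinity: $\int_{|t|>m}|v_kg'|\le\|g'\|_{L^2(|t|>m)}\to0$ uniformly in $k$, so $\int_\R v_\infty g'=0$. For the energy, fix $m\ge T$; dropping the nonnegative tail $\int_{|t|>m}W''(g)v_k^2\ge0$ and using weak lower semicontinuity of the gradient term on $(-m,m)$ together with strong $L^2$ convergence for the potential term yields $\int_{-m}^m|v_\infty'|^2+W''(g)v_\infty^2\le\liminf_k\int_{-L_k^-}^{L_k^+}|v_k'|^2+W''(g)v_k^2\le0$. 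Letting $m\to\infty$, with dominated convergence for the potential term, gives $\int_\R|v_\infty'|^2+W''(g)v_\infty^2\le0$.

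This contradicts the whole-line picture: since $v_\infty\in H^1(\R)$ satisfies $\int_\R v_\infty g'=0$, the coercivity $\int_\R|v_\infty'|^2+W''(g)v_\infty^2\ge\lambda_2\int_\R v_\infty^2>0$ forces $v_\infty\equiv0$, contradicting $v_\infty\not\equiv0$. Hence the desired $L_0$ and $\mu$ exist. I expect the main obstacle to be exactly the no-mass-escape step: a priori the near-optimal test functions $v_k$ could spread their $L^2$ mass toward $\pm\infty$ as the intervals grow, making the limit degenerate; the resolution is that $W''(g)$ is uniformly positive away from a fixed neighborhood of the interface, so any such mass would carry definite positive energy and is therefore excluded by $\int|v_k'|^2+W''(g)v_k^2\to0$. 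The remaining care is routine: justifying the passage to the limit in both the orthogonality constraint and the energy on the noncompact domain, which is handled by the exponential decay of $g'$ and by exhausting $\R$ with the intervals $[-m,m]$.
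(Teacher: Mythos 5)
Your proof is correct and follows essentially the same contradiction-and-compactness argument as the paper: a normalized sequence on growing intervals violating the inequality, passage of the orthogonality constraint and of the quadratic form to a limit on $\R$ (via the exponential decay of $g^\prime$ and the uniform positivity of $W^{\prime\prime}(g)$ outside a compact set), and the whole-line spectral theory of $-\frac{d^2}{dt^2}+W^{\prime\prime}(g)$. The only difference is cosmetic ordering: you exclude mass escape at the outset (so $v_\infty\not\equiv 0$ and the whole-line coercivity gives the contradiction), whereas the paper first concludes $v_\infty\equiv 0$ from the spectral theory and then uses the positivity of $W^{\prime\prime}(g)$ at infinity to force the total $L^2$ mass of the sequence to vanish, contradicting the normalization.
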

\begin{proof}
Assume by the contrary, there exist $L_j^\pm\to+\infty$ and $v_j\in
H^1(-L^-_j,L^+_j)$ satisfying
\begin{equation}\label{3.6}
\int_{-L_j^-}^{L_j^+}v_j(t)g^\prime(t)dt=0,
\end{equation}
and
\begin{equation}\label{3.7}
\int_{-L_j^-}^{L_j^+}v_j(t)^2dt=1,
\end{equation}
but
\begin{equation}\label{3.8}
\int_{-L_j^-}^{L_j^+}\Big|\frac{dv_j}{dt}(t)\Big|^2+W^{\prime\prime}(g(t))v_j(t)^2dt
\leq\frac{1}{j}.
\end{equation}

From the last two assumptions we deduce that
\begin{equation}\label{3.9}
\int_{-L_j^-}^{L_j^+}\Big|\frac{dv_j}{dt}(t)\Big|^2dt \leq C,
\end{equation}
for some constant $C$ depending only on $\sup|W^{\prime\prime}|$.
Hence the $1/2$-H\"{o}lder seminorm of $v_j$ is uniformly bounded.
Then by \eqref{3.7}, $\sup|v_j|$ is also uniformly bounded. Assume
$v_j$ converges to $v_\infty$ in $C_{loc}(\R)$.

By the exponential decay of $g^\prime$ at infinity, \eqref{3.6} can
be passed to the limit, which gives
\begin{equation}\label{3.6.1}
\int_{-\infty}^{+\infty}v_\infty(t)g^\prime(t)dt=0.
\end{equation}
\eqref{3.7} and \eqref{3.9} can also be passed to the limit, leading
to
\begin{equation}\label{3.7.1}
\int_{-\infty}^{+\infty}v_\infty(t)^2+\Big|\frac{dv_\infty}{dt}(t)\Big|^2dt
\leq C+1.
\end{equation}

Because $g$ converges to $\pm1$ at $\pm\infty$ respectively, there
exists an $R_2$ such that
\begin{equation}\label{3.11}
W^{\prime\prime}(g(t))\geq
c_0:=\frac{1}{2}\min\{W^{\prime\prime}(-1), W^{\prime\prime}(1)\}>0,
\quad\mbox{in } \{|t|\geq R_2\}.
\end{equation}
 Thus for any $R\geq R_2$,
\begin{eqnarray*}
\int_{-R}^R\Big|\frac{dv_\infty}{dt}(t)\Big|^2+W^{\prime\prime}(g(t))v_\infty(t)^2dt
&\leq&\liminf_{j\to+\infty}\int_{-R}^R\Big|\frac{dv_j}{dt}(t)\Big|^2+W^{\prime\prime}(g(t))v_j(t)^2dt\\
&\leq&\liminf_{j\to+\infty}\int_{-L_j^-}^{L_j^+}\Big|\frac{dv_j}{dt}(t)\Big|^2+W^{\prime\prime}(g(t))v_j(t)^2dt\\
&\leq&0.
\end{eqnarray*}
By \eqref{3.7.1}, we can let $R\to+\infty$, which leads to
\[\int_{-\infty}^{+\infty}\Big|\frac{dv_\infty}{dt}(t)\Big|^2+W^{\prime\prime}(g(t))v_\infty(t)^2dt\leq0.\]
Then by the spectrum theory for
$-\frac{d^2}{dt^2}+W^{\prime\prime}(g(t))$ (see for example
\cite[Lemma 1.1]{DKP}) and \eqref{3.6.1}, $v_\infty\equiv 0$.

By the convergence of $v_j$ in $C_{loc}(\R)$,
\begin{equation}\label{3.10}
\lim_{j\to+\infty}\int_{-R_2}^{R_2}v_j(t)^2dt=0.
\end{equation}
Substituting this into \eqref{3.8}, by noting \eqref{3.11}, we get
\[
\int_{(-L_j^-,-R_2)\cup(R_2,L_j^+)}v_j(t)^2dt\leq
C\left(\frac{1}{j}+ \int_{-R_2}^{R_2}v_j(t)^2dt\right)\to0.\]
Combining this with \eqref{3.10} we get a contradiction with
\eqref{3.7}. Thus under the assumptions \eqref{3.6} and \eqref{3.7},
\eqref{3.8} cannot hold.
\end{proof}

With these preliminaries, we come to the proof of Theorem \ref{thm
3.1}.
\begin{proof}[Proof of Theorem \ref{thm 3.1}]
Given a tuple $(t_1,\cdots, t_N)$ with $t_1<\cdots<t_N$, define

\makeatletter
\let\@@@alph\@alph
\def\@alph#1{\ifcase#1\or \or $'$\or $''$\fi}\makeatother
\begin{equation}\label{1d solution}
{g(y;t_1,\cdots, t_N)=}
\begin{cases}
g(y-t_1), &y<t_1^+, \nonumber\\
\min\{g(y-t_1),-g(y-t_2)\}, &t_1^+=t_2^-<x<t_2^+,\\
\min\{-g(y-t_2),g(y-t_3)\}, &t_2^+=t_3^-<x<t_3^+,\nonumber\\
\cdots.\nonumber
\end{cases}
\end{equation}
\makeatletter\let\@alph\@@@alph\makeatother In the above,
\[t_i^+:=\frac{t_i+t_{i+1}}{2},\quad t_i^-:=\frac{t_{i-1}+t_i}{2},\]
and for simplicity of notation $t_1^-=-\lambda x$ and $t_N^+=\lambda
x$.

Note that $g(y;t_i)$ is continuous, while its derivative in $t$ has
a jump at $t_i^+$. (In fact, the left and right derivatives at each
$t_i^+$ only differ by a sign.)

Next we define
\[F(x;t_1,\cdots, t_N):=\int_{-\lambda x}^{\lambda x}\big|u(x,y)-g(y;t_1,\cdots, t_N)\big|^2dy.\]

We divide the proof into three steps.

{\bf Step 1.} As $x\to+\infty$, $\int_{-\lambda x}^{\lambda
x}\big|u(x,y)-g(y;f_i(x))\big|^2dy\to0$.

This follows from Lemma \ref{translation at infinity} and Lemma
\ref{exponential decay}.

{\bf Step 2.} By Step 1,
\[\lim_{x\to+\infty}F(x;f_1(x),\cdots,f_N(x))=0.\]
 Moreover, for any $\varepsilon>0$, there exists
a $\delta>0$ such that, if $|t_i-f_i(x)|>\delta$ for some $i$, then
\begin{equation}\label{3.1}
\liminf_{x\to+\infty}F(x;t_1,\cdots, t_N)\geq\varepsilon.
\end{equation}

Direct calculations give
\begin{equation}\label{F derivative}
\frac{\partial F}{\partial t_i}(x;t_1,\cdots,
t_N)=2(-1)^i\int_{t_i^-}^{t_i^+}\left[u(x,y)-(-1)^{i-1}g(y-t_i)\right]g^\prime(y-t_i)dy.
\end{equation}
\begin{eqnarray}\label{F derivative second order}
\frac{\partial^2 F}{\partial t_i^2}(x;t_1,\cdots,
t_N)&=&2\int_{t_i^-}^{t_i^+}g^\prime(y-t_i)^2dy\nonumber\\
&&+2(-1)^{i+1}\int_{t_i^-}^{t_i^+}\left[u(x,y)-(-1)^{i-1}g(y-t_i)\right]g^{\prime\prime}(y-t_i)dy \nonumber\\
&&+O(e^{-c\min\{t_i-t_{i-1},t_{i+1}-t_i\}}).
\end{eqnarray}
By Step 1, Lemma \ref{exponential decay} and the exponential decay
of $g^{\prime\prime}$ at infinity, there exists a $\sigma>0$ such
that, for any $(t_1,\cdots, t_N)$ satisfying $|t_i-f_i(x)|<\sigma$,
$\frac{\partial^2F}{\partial t_i^2}(x;t_i)>\sigma$.

Finally, if $|i-j|>1$, $\frac{\partial^2 F}{\partial t_i\partial
t_j}(x;t_i)=0$ and
\[\Big|\frac{\partial^2 F}{\partial t_i \partial t_{i+1}}(x;t_i)\Big|\leq Ce^{-c\left(t_{i+1}-t_i\right)}.\]
Combining this with \eqref{F derivative second order} we see
$[\frac{\partial^2 F}{\partial t_i \partial t_j}(x;t_i)]$ is
positively definite for those $(t_1,\cdots, t_N)$ satisfying the
condition that $|t_i-f_i(x)|$ is small enough for all $i$.

Combining the above analysis, we see for all $x$ large, there exists
a unique tuple $(t_i(x))$ such that
\[F(x;t_i(x))=\min_{(t_i)\in\R^N}F(x,t_i).\]
Moreover,
\begin{equation}\label{5.1.1}
\lim_{x\to+\infty}|t_i(x)-f_i(x)|=0,\quad \forall 1\leq i\leq N.
\end{equation}
By the implicit function theorem, for each $i$, $t_i(x)$ is twice
differentiable in $x$.

Lemma \ref{lem 3.3} and \eqref{5.1.1} implies that for any $1\leq
i\leq N-1$,
\begin{equation}\label{distance large}
t_{i+1}(x)-t_i(x)\to+\infty,\quad \mbox{as } x\to+\infty.
\end{equation}

Let
\[v(x,y):=u(x,y)-g(y;t_i(x)).\]
Clearly
\begin{equation}\label{5.2}
\lim_{x\to+\infty}\|v\|_{L^2(-\lambda x,\lambda
x)}=\lim_{x\to+\infty}F(x;t_i(x))=0.
\end{equation}

In the following we denote $g^\ast:=g(y;t_i(x))$ and
\[g_i(y):=(-1)^{i-1}g(y-t_i(x)),\quad \mbox{for}\ y\in(t_i^-,t_i^+).\]

By definition,
\begin{equation}\label{3.2}
0=\frac{\partial F}{\partial
t_i}(x;t_i(x))=2\int_{t_i^-(x)}^{t_i^+(x)}
\left(u-g_i\right)g_i^\prime.
\end{equation}

Differentiating \eqref{3.2} with respect to $x$ leads to
\begin{eqnarray}\label{5.4}
&&\left[\int_{t_i^-(x)}^{t_i^+(x)}|g_i^\prime|^2-\left(u-g_i\right)
g_i^{\prime\prime}\right]t_i^\prime(x)+\int_{t_i^-(x)}^{t_i^+(x)}u_xg_i^\prime\nonumber\\
&=&-\left[u(x,t_i^+(x))-g_i(t_i^+(x))\right]g_i^\prime(t_i^+(x))\frac{t_i^\prime(x)+t_{i+1}^\prime(x)}{2}\\
&+&\left[u(x,t_i^-(x))-g_i(t_i^-(x))\right]g_i^\prime(t_i^-(x))\frac{t_i^\prime(x)+t_{i-1}^\prime(x)}{2}.\nonumber
\end{eqnarray}

Note that by the result in Step 1 and the exponential decay of
$g^{\prime\prime}$ at infinity,
\begin{eqnarray*}
\lim_{x\to+\infty}\int_{t_i^-(x)}^{t_i^+(x)}\left(u-g_i\right)
g^{\prime\prime}&\leq&\lim_{x\to+\infty}\left[\int_{t_i^-(x)}^{t_i^+(x)}\left(u-g_i\right)^2\right]^{\frac{1}{2}}
\left[\int_{t_i^-(x)}^{t_i^+(x)}\big|g^{\prime\prime}\big|^2\right]^{1/2}\\
&=&0,
\end{eqnarray*}
while by \eqref{distance large}, there exists a constant $c>0$ such
that
\[\int_{t_i^-(x)}^{t_i^+(x)}|g_i^\prime|^2\geq c, \quad\forall x \mbox{ large}.\]
By Lemma \ref{exponential decay} and \eqref{distance large},
$u(x,t_i^\pm(x))$ and $g_i(t_i^\pm(x))$ all converge to $0$ as
$x\to+\infty$. Thus by \eqref{5.4} we obtain
\begin{equation}\label{5.4.0}
t_i^\prime(x)=-\frac{\int_{t_i^-(x)}^{t_i^+(x)}u_xg^\prime_i}
{\left[\int_{t_i^-(x)}^{t_i^+(x)}|g^\prime_i|^2\right]+o(1)}+o(1)\sum_{j\neq
i}\frac{\Big|\int_{t_j^-(x)}^{t_j^+(x)}u_xg^\prime_j\Big|}
{\left[\int_{t_j^-(x)}^{t_j^+(x)}|g^\prime_j|^2\right]+o(1)}+O(e^{-cx})\rightarrow
0,\quad \mbox{as}\ x\to+\infty.
\end{equation}
Differentiating this once again we see $t_i^{\prime\prime}(x)$ also
converges to $0$ as $x\to+\infty$.

Similar to the calculation in \cite[page 927]{Gui}, we have
\begin{eqnarray*}
&&\int_{t_i^-(x)}^{t_i^+(x)}\left(\frac{u_y^2-u_x^2}{2}+W(u)\right)
-\frac{|g^\prime_i|^2}{2}-W(g_i)\\
&=&\int_{t_i^-(x)}^{t_i^+(x)}\left(\frac{u_y^2-|g_i^\prime|^2}{2}+W(u)-W(g_i)-\frac{u_x^2}{2}\right)\\
&=&\int_{t_i^-(x)}^{t_i^+(x)}\left[W(u)-W(g_i)-\frac{W^\prime(u)+W^\prime(g_i)}{2}\left(u-g_i\right)\right]\\
&&+\frac{1}{2}\int_{t_i^-(x)}^{t_i^+(x)}\left[\left(u-g_i\right)u_{xx}-u_x^2\right]+\mathcal{B},
\end{eqnarray*}
where $\mathcal{B}$ is the boundary terms coming from integrating by
parts. In the above we have used
\begin{eqnarray*}
\int_{t_i^-(x)}^{t_i^+(x)}u_y^2-|g_i^\prime|^2
&=&\int_{t_i^-(x)}^{t_i^+(x)}\left(u_y-g_i^\prime\right)
\left(u_y+g_i^\prime\right)\\
&=&-\int_{t_i^-(x)}^{t_i^+(x)}\left(u-g_i\right)
\left(u_{yy}+g_i^{\prime\prime}\right)\\
&&+\left[u(x,t_i^+(x))-g_i(t_i^+(x))\right]
\left[u_y(x,t_i^+(x))+g_i^\prime(t_i^+(x))\right]\\
&&-\left[u(x,t_i^-(x))-g_i(t_i^-(x))\right]
\left[u_y(x,t_i^-(x))+g_i^\prime(t_i^-(x))\right]\\
&=&-\int_{t_i^-(x)}^{t_i^+(x)}\left(u-g_i\right)
\left[W^\prime(u)+W^\prime(g_i)\right]+\int_{t_i^-(x)}^{t_i^+(x)}u_{xx}\left(u-g_i\right)\\
&&+\left[u(x,t_i^+(x))-g_i(t_i^+(x))\right]
\left[u_y(x,t_i^+(x))+g_i^\prime(t_i^+(x))\right]\\
&&-\left[u(x,t_i^-(x))-g_i(t_i^-(x))\right]
\left[u_y(x,t_i^-(x))+g_i^\prime(t_i^-(x))\right].
\end{eqnarray*}

Summing in $i$ and using the Hamiltonian identity, we obtain
\begin{eqnarray}\label{5.5}
\int_{-\lambda x}^{\lambda x}u_{xx}\left(u-g^\ast\right)-u_x^2
&=&\sum_i\int_{t_i^-(x)}^{t_i^+(x)}\left[\left(u-g_i\right)u_{xx}-u_x^2\right]\nonumber\\
&=&-2\sum_i\left[u(x,t_i^+(x))-g_i(t_i^+(x))\right]g_i^\prime(t_i^+(x))+o(\|v\|^2) \nonumber\\
&&+2\sum_i\left[\int_{t_i^+(x)}^{+\infty}|g_i^\prime|^2+\int_{-\infty}^{t_i^-(x)}|g_i^\prime|^2\right]+O(e^{-cx}).
\end{eqnarray}

On the other hand, similar to \cite[Eq. (4.35)]{Gui}, we have
\begin{eqnarray}\label{5.8}
\int_{t_i^-(x)}^{t_i^+(x)}u_{xx}\left(u-g_i\right)&=&\int_{t_i^-(x)}^{t_i^+(x)}
\left(W^\prime(u)-u_{yy}\right)\left(u-g_i\right)\nonumber\\
&=&\int_{t_i^-(x)}^{t_i^+(x)}\left[W^\prime(u)-W^\prime(g_i)-W^{\prime\prime}(g_i)\left(u-g_i\right)\right]
\left(u-g_i\right)\\
&&+\int_{t_i^-(x)}^{t_i^+(x)}\left(g_i^{\prime\prime}-u_{yy}\right)\left(u-g_i\right)
+W^{\prime\prime}(g_i)\left(u-g_i\right)^2\nonumber\\
&=&o(\|v\|^2)+\int_{t_i^-(x)}^{t_i^+(x)}\big|\left(u-g_i\right)_y\big|^2+W^{\prime\prime}(g_i)\left(u-g_i\right)^2\nonumber\\
&-&\left[u(x,t_i^+(x))-g_i(t_i^+(x))\right]\left[u_y(x,t_i^+(x))-g_i^\prime(t_i^+(x))\right]\nonumber\\
&+&\left[u(x,t_i^-(x))-g_i(t_i^-(x))\right]\left[u_y(x,t_i^-(x))-g_i^\prime(t_i^-(x))\right].\nonumber
\end{eqnarray}

Summing in $i$ we get
\begin{eqnarray}\label{5.8.1}
\int_{-\lambda x}^{\lambda x}u_{xx}\left(u-g^\ast\right)&=&
o(\|v\|^2)+\sum_i\int_{t_i^-(x)}^{t_i^+(x)}\big|\left(u-g_i\right)_y\big|^2+W^{\prime\prime}(g_i)\left(u-g_i\right)^2\nonumber\\
&+&2\sum_i\left[u(x,t_i^+(x))-g_i(t_i^+(x))\right]g_i^\prime(t_i^+(x))+O(e^{-cx}).
\end{eqnarray}

By \eqref{3.2} and \eqref{distance large}, Proposition \ref{second
eigenvalue} applies to $u-g_i$ in $(t_i^-(x),t_i^+(x))$, which gives
\begin{equation}\label{5.7}
\int_{t_i^-(x)}^{t_i^+(x)}\big|\left(u-g_i\right)_y\big|^2+W^{\prime\prime}(g_i)\left(u-g_i\right)^2
\geq\mu\int_{t_i^-(x)}^{t_i^+(x)}\left(u-g_i\right)^2.
\end{equation}

Hence
\begin{equation}\label{5.7.1}
\int_{-\lambda x}^{\lambda x}u_{xx}\left(u-g^\ast\right)
\geq\left(\mu+o(1)\right)\|v\|^2
+2\sum_i\left[u(x,t_i^+(x))-g_i(t_i^+(x))\right]g^\prime_i(t_i^+(x))+O(e^{-cx}).
\end{equation}
Combining this with \eqref{5.5}, we deduce that
\begin{eqnarray}\label{5.7.2}
 \int_{-\lambda
x}^{\lambda x}u_x^2&\geq&\left(\mu+o(1)\right)\|v\|^2
+4\sum_i\left[u(x,t_i^+(x))-g_i(t_i^+(x))\right]g^\prime_i(t_i^+(x))\nonumber\\
&-&2\sum_i\left[\int_{t_i^+(x)}^{+\infty}|g_i^\prime|^2+\int_{-\infty}^{t_i^-(x)}|g_i^\prime|^2\right]+O(e^{-cx}).
\end{eqnarray}

Differentiating $\|v\|^2$ twice in $x$ leads to
\begin{eqnarray*}
\frac{1}{2}\frac{d}{dx}\|v\|^2&=&\sum_i\int_{t_i^-(x)}^{t_i^+(x)}\left(u-g_i\right)\left[u_x+g_i^\prime
t_i^\prime(x)\right]\\
&=&\sum_i\int_{t_i^-(x)}^{t_i^+(x)}\left(u-g_i\right)u_x,\quad{\mbox{(by
\eqref{3.2})}}
\end{eqnarray*}
and
\begin{eqnarray*}
\frac{1}{2}\frac{d^2}{dx^2}\|v\|^2
&=&\sum_i\int_{t_i^-(x)}^{t_i^+(x)} u_x^2+u_xg_i^\prime
t_i^\prime(x)+u_{xx}\left(u-g_i\right)\\
&\geq&2\sum_i\int_{t_i^-(x)}^{t_i^+(x)}u_x^2-\frac{3}{2}\sum_i\frac{\left(\int_{t_i^-(x)}^{t_i^+(x)}
u_xg_i^\prime\right)^2}{\int_{t_i^-(x)}^{t_i^+(x)}\big|g_i^\prime\big|^2}
\quad\mbox{(by \eqref{5.5} and \eqref{5.4.0})}\\
&-&2\sum_i\left[u(x,t_i^+(x))-g_i(t_i^+(x))\right]g_i^\prime(t_i^+(x))+2\sum_i\left[\int_{t_i^+(x)}^{+\infty}|g_i^\prime|^2+\int_{-\infty}^{t_i^-(x)}|g_i^\prime|^2\right]\\
&\geq&\sum_i\frac{1}{2}\int_{t_i^-(x)}^{t_i^+(x)}u_x^2-2\sum_i\left[u(x,t_i^+(x))-g_i(t_i^+(x))\right]g_i^\prime(t_i^+(x))
\quad\mbox{(by Cauchy-Schwarz)}\\
&+&\sum_i\left[\int_{t_i^+(x)}^{+\infty}|g_i^\prime|^2+\int_{-\infty}^{t_i^-(x)}|g_i^\prime|^2\right]\\
&\geq&\frac{1}{2}\left[\mu+o(1)\right]\|v\|^2. \quad\mbox{(by
\eqref{5.7.2})}
\end{eqnarray*}

By noting \eqref{5.2}, from this inequality we deduce that
\begin{equation}\label{5.9}
\|v\|^2\leq Ce^{-cx},\quad\mbox{for all $x$ large}.
\end{equation}

{\bf Step 3.} Note that
\begin{eqnarray*}
g_i(t_i^+(x))g_i^\prime(t_i^+(x))&=&\int_{t_i^+(x)}^{+\infty}\big|g_i^\prime\big|^2+g_ig_i^{\prime\prime}\\
&\leq&\int_{t_i^+(x)}^{+\infty}\big|g_i^\prime\big|^2,
\end{eqnarray*}
because $g_i$ is close to $1$ in $(t_i^+(x),+\infty)$ (see
\eqref{distance large}) and hence
$g_i^{\prime\prime}=W^\prime(g_i)<0$ in this interval. We also have
$g_i(t_i^+(x))g_i^\prime(t_i^+(x))>0$, because $g_i(t_i^+(x))>0$ and
$g_i^\prime(t_i^+(x))>0$.

Then for all $x$ large, by noting that $g_i(t_i^+(x))$ is close to
$1$ and $u(x,t_i^+(x))-g_i(t_i^+(x))$ is close to $0$, we obtain
\begin{eqnarray*}
\Big|\left[u(x,t_i^+(x))-g_i(t_i^+(x))\right]g_i^\prime(t_i^+(x))\Big|&\leq&\frac{1}{2}g_i(t_i^+(x))g_i^\prime(t_i^+(x))\\
&\leq&\frac{1}{2}\int_{t_i^+(x)}^{+\infty}\big|g_i^\prime\big|^2.
\end{eqnarray*}
Substituting this into \eqref{5.5}, we get
\begin{eqnarray}\label{5.10}
\int_{-\lambda x}^{\lambda x}u_x^2&\leq&\int_{-\lambda x}^{\lambda
x}u_{xx}\left(u-g_\ast\right)+o(\|v\|^2)+O(e^{-cx}) \nonumber\\
&\leq&\left[\int_{-\lambda x}^{\lambda
x}u_{xx}^2\right]^{\frac{1}{2}}\|v\|+o(\|v\|^2)+O(e^{-cx})\\
&\leq &Ce^{-cx}.\nonumber \quad\mbox{(by \eqref{5.01} and
\eqref{5.9})}
\end{eqnarray}
Then by \eqref{5.4.0} and the Cauchy-Schwarz inequality, we get
\[|t_i^\prime(x)|\leq Ce^{-cx}, \quad\forall i.\]
Thus for all $1\leq i\leq N$, $\lim_{x\to+\infty}t_i(x)$ exists and
it is finite. By noting \eqref{5.1.1}, for each $i$, the limit
$\lim_{x\to+\infty}f_i(x)$ also exists. In particular, this limit is
finite. Then for all $1\leq i\leq N-1$,
\[\lim_{x\to+\infty}\left(f_{i+1}(x)-f_i(x)\right)\]
also exists and it is finite. However, this is a contradiction with
Lemma \ref{lem 3.3} if $N\geq 2$. Hence we must have $N=1$.

Finally, the exponential convergence of $u(x,\cdot)$ follows from
\eqref{5.10}, and the exponential convergence of $f_i(x)$ follows
from this exponential convergence and the (uniform) positive lower
bound on $g^\prime$ and $u_y(x,\cdot)$ in the part where $|g|<1/2$
and $|u|<1/2$.
\end{proof}

\end{document}